\documentclass[11pt]{amsart}
\usepackage[T1]{fontenc}
\usepackage[utf8]{inputenc}
\usepackage{lmodern}
\usepackage{geometry}
\usepackage{amsmath,amssymb,amsthm}
\usepackage{mathtools}
\usepackage{microtype}
\usepackage{enumitem}
\usepackage{hyperref}

\usepackage{tikz}
\usetikzlibrary{decorations.pathreplacing,calc}
\usetikzlibrary{arrows.meta}
\usetikzlibrary{patterns}
\usetikzlibrary{shapes.misc}
\usetikzlibrary{arrows.meta,calc}

\tikzset{
    gridline/.style={gray!30, thin},
    nodepoint/.style={circle, fill=black, inner sep=1.2pt},
    halfface/.style={thick},
    cutface/.style={thick},
    interface/.style={thick},
    annotation/.style={font=\small}
}

\hypersetup{colorlinks=true, linkcolor=blue, citecolor=blue, urlcolor=blue}

\theoremstyle{plain}
\newtheorem{theorem}{Theorem}[section]
\newtheorem{lemma}[theorem]{Lemma}
\newtheorem{corollary}[theorem]{Corollary}
\newtheorem{proposition}[theorem]{Proposition}

\newtheorem{assumption}[theorem]{Assumption}
\newtheorem{definition}[theorem]{Definition}

\numberwithin{equation}{section}

\title[Second-Order Accuracy from Large Local Errors:
A Green's Function Analysis]
{Second-Order Accuracy from Large Local Errors in Interface Problems:
A Discrete Green's Function Analysis}

\author{So-Hsiang Chou}
\author{Patrick Nyadjo Fonga}
\begin{document}

\begin{abstract}
Finite difference methods for interface problems often exhibit large
local truncation errors near the interface, particularly when
discontinuities in coefficients or solution derivatives are present.
Nevertheless, many such schemes achieve second-order accuracy, a
phenomenon not fully explained by standard pointwise consistency
arguments.

In this work, we provide a precise explanation of this behavior
through the structure of the discrete Green's function associated
with the underlying conservative difference operator.  An explicit
representation of the Green's function reveals a two-plateau
structure in its weighted increments.  By expressing the numerical
error in terms of this Green kernel, we show that the dominant
interface truncation errors, although individually of order \(O(1)\),
possess a cancellation structure that reduces their effective
contribution to the global error.

The exact Green's-function analysis leads to a class of conservative interface flux--balance schemes for which the cancellation mechanism yields second-order accuracy in the maximum norm. The weighted harmonic discretization is included as a particular member, and its cancellation property is established directly. For Cartesian grids with a flat interface parallel to a coordinate direction, the same conservative cancellation mechanism persists along grid lines crossing the interface. Numerical experiments in one and two dimensions directly illustrate the predicted cancellation behavior and the resulting second-order accuracy.

These results demonstrate that second-order accuracy in interface
problems can arise from the interaction between localized truncation
errors and the global structure of the discrete operator, rather than
from pointwise consistency alone.
\end{abstract}

\maketitle

\noindent\textbf{keywords:}
discrete Green's function; interface problems; finite difference methods;
local truncation error; conservative discretization; maximum-norm convergence.
\medskip

\address{Department of Mathematics and Statistics,
Bowling Green State University,
Bowling Green, OH 43403, USA}

\email{Corresponding Author: chou@bgsu.edu}

\section{Introduction}
\label{sec:introduction}

Finite difference and finite volume methods for interface problems
often exhibit large local truncation errors near the interface,
especially when discontinuities in coefficients or solution
derivatives are present.  Nevertheless, many such schemes retain
second-order global accuracy.  This behavior cannot be explained by
pointwise consistency alone, since the truncation errors at grid
points adjacent to the interface may remain of order \(O(1)\).

A variety of numerical approaches have been developed to treat such
loss of regularity.  Immersed interface methods incorporate jump
conditions through local corrections; see, for example, LeVeque and
Li~\cite{leveque1994immersed} and Li and Ito~\cite{li2006immersed}.
Related approaches based on local stencil modification and algebraic
enforcement of interface conditions include the method of Wiegmann
and Bube~\cite{WiegmannBube2000}.  Simple finite-difference
constructions for elliptic interface problems have also been studied
by Tzou and Stechmann~\cite{tzou2019simple}.  Harmonic-type interface
schemes, such as those considered in \cite{pan2026harmonic}, provide
another attractive approach because of their conservative flux
structure and simple implementation.

A broader context is provided by the classical analysis of conservative
finite difference and finite volume schemes for problems with nonsmooth
solutions or discontinuous coefficients.  General finite volume
principles based on local conservation and numerical fluxes are
developed, for example, by Eymard, Gallou\"et, and
Herbin~\cite{EymardGallouetHerbin2000}; related cell-centered finite
volume analysis may be found in Lazarov, Mishev, and
Vassilevski~\cite{LazarovMishevVassilevski1996}.
Finite-difference convergence under reduced regularity is treated
systematically by Jovanovi\'c and S\"uli~\cite{JovanovicSuli2014}.  For elliptic
problems with discontinuous coefficients, Ewing, Iliev, and
Lazarov~\cite{EwingIlievLazarov2001} developed modified finite volume
approximations based on accurate treatment of the interface flux.
Harmonic averaging also arises naturally in one-dimensional
finite-volume difference discretizations of interface problems; see
Ewing, Iliev, Lazarov, and
Naumovich~\cite{EwingIlievLazarovNaumovich2007}.  From a different
direction, Beale and Layton~\cite{BealeLayton2006} showed how reduced
local consistency near an interface can nevertheless lead to
second-order solution accuracy through discrete elliptic and
Green's-function estimates.  These results provide a broader setting for the question considered here: how localized interface defects are transmitted through a conservative discrete operator. Our purpose is to isolate the underlying mechanism in a setting where the discrete Green's function can be obtained explicitly and the interaction between conservation, cancellation, and global error propagation can be seen without additional geometric effects. The resulting analysis leads to a structural cancellation principle for conservative interface discretizations and, for Cartesian grids with a flat interface, the same mechanism persists along grid lines crossing the interface.

We begin with the one-dimensional flux-form problem, where the Green's kernel can be characterized exactly. By examining how localized truncation errors propagate through the inverse operator, we show that conservation forces a cancellation between the leading interface defects and that the discrete Green's function converts this cancellation into second-order global accuracy.

To make this precise, consider the one-dimensional interface problem
\begin{equation}\label{eq:Dirichlet}
-\frac{d}{dx}
\left(
\beta(x)\frac{du}{dx}
\right)
=f,
\qquad
x\in(a,\alpha)\cup(\alpha,b),
\qquad
u(a)=u(b)=0,
\end{equation}
where
\[
\beta(x)>0
\]
may be discontinuous at \(x=\alpha\). For any quantity \(g\) admitting one-sided limits at \(\alpha\), we use
the convention
\[
[g]_\alpha:=g(\alpha^+)-g(\alpha^-).
\]  Across the interface \(\alpha\) we impose the jump conditions
\[
[u]_\alpha=w,
\qquad\text{and}\qquad
[\beta u_x]_\alpha=v.
\]
 Let
\[
a=x_0<x_1<\cdots <x_n<x_{n+1}=b,
\qquad
x_i=a+ih,
\qquad
h=\frac{b-a}{n+1}.
\]
We assume that the interface lies strictly between two consecutive grid
points,
\(
x_I<\alpha<x_{I+1},
\)
for some \(1\le I\le n-1\). Let \(U_i\) denote the numerical
approximation to \(u(x_i)\), with
\[
U_0=U_{n+1}=0.
\]

We consider the conservative flux-form system
\begin{equation}\label{eq:generalsys}
-\beta_{i-\frac12}U_{i-1}
+
\left(
\beta_{i-\frac12}+\beta_{i+\frac12}
\right)U_i
-
\beta_{i+\frac12}U_{i+1}
=
h^2 r_i^h,
\qquad i=1,\ldots,n,
\end{equation}
with
\[
\beta_{i+\frac12}>0,
\qquad i=0,\ldots,n.
\]
The quantities \(\beta_{i+\frac12}\) are the numerical face
coefficients. On intervals lying entirely to the left or right of the
interface, they coincide with \(\beta(x)\)
whereas on the cut interval \((x_I,x_{I+1})\),
\(\beta_{I+\frac12}\) denotes an effective positive interface
coefficient determined by the particular discretization.
This form includes standard pointwise coefficient approximations as well as interface constructions based on effective face conductivities.
Here \(r_i^h=f(x_i)\) at ordinary grid points.  At the two
interface-adjacent nodes, \(r_I^h\) and \(r_{I+1}^h\) may also contain
the correction terms arising from the prescribed jump conditions.
Thus \eqref{eq:generalsys} represents the matrix form of the
discretization after the interface corrections have been transferred
to the right-hand side.
Equivalently, before these corrections are transferred to the
right-hand side, the discrete equations may be written in conservative
flux-difference form
\begin{equation}\label{eq:flux_form}
\frac{1}{h}
\left(
F_{i-\frac12}^h-F_{i+\frac12}^h
\right)
=f(x_i),
\end{equation}
with
\begin{equation}\label{eq:flux-def}
F_{i+\frac12}^h
=
\beta_{i+\frac12}
\frac{U_{i+1}-U_i}{h}
\end{equation}
on ordinary grid intervals.
At an interval cut by the interface, the ordinary flux is replaced by
a corrected numerical flux incorporating the prescribed jump data.
When these correction terms are moved to the right-hand side, the
matrix retains the flux form in \eqref{eq:generalsys}, while the
corresponding right-hand-side entries at the interface-adjacent nodes
are modified.

We first derive an explicit formula for the inverse of the general
flux-form tridiagonal matrix in \eqref{eq:generalsys}. Let \(A_h\)
denote the tridiagonal matrix appearing on the left-hand side of
\eqref{eq:generalsys}, and define the scaled finite-difference operator
by
\[
L_h:=\frac{1}{h^2}A_h.
\]
Thus the discrete problem may be written as
\(
L_hU=r_h.
\)
The discrete Green kernel associated with \(L_h\) is defined by
\[
G_{ij}:=(L_h^{-1})_{ij},
\qquad 1\le i,j\le n.
\]The derivation
reveals a simple structural property: for each column of the discrete
Green's function, the weighted first differences have exactly two
constant plateaus, one on each side of the source node.  After scaling
to the finite-difference operator, the entries \(G_{ij}\) of its
inverse satisfy
\begin{equation}\label{eq:Goh-Goh2}
G_{ij}=O(h),
\qquad
G_{i,j+1}-G_{ij}=O(h^2),
\end{equation}
together with a mesh-independent maximum-norm stability estimate.

These properties explain how large interface truncation errors are
filtered by the inverse discrete operator.  
For the error analysis,
let
\(
u_h:=(u(x_1),\ldots,u(x_n))^T
\)
denote the exact solution sampled at the interior grid points, and
define the truncation-error vector by
\[
\tau:=L_hu_h-r_h.
\]
Thus \(\tau_i\) is the residual obtained by inserting the exact grid
values into the \(i\)-th assembled discrete equation. In particular,
\(\tau_I\) and \(\tau_{I+1}\) denote the two components corresponding to
the grid points adjacent to the interface
\(x_I<\alpha<x_{I+1}\).  Suppose that
\begin{equation}\label{eq:O1Oh}
\tau_I=O(1),
\qquad
\tau_{I+1}=O(1),
\qquad
\tau_I+\tau_{I+1}=O(h).
\end{equation}
For the weighted harmonic interface discretization, these
properties are established in
Lemma~\ref{lem:harmonic-cancellation}.

Hence the contribution to the error can be written as
\[
G_{iI}(\tau_I+\tau_{I+1})
+
\bigl(G_{i,I+1}-G_{iI}\bigr)\tau_{I+1}.
\]
The Green's-function estimates then give
\[
G_{iI}(\tau_I+\tau_{I+1})=O(h^2),
\]
and
\[
\bigl(G_{i,I+1}-G_{iI}\bigr)\tau_{I+1}=O(h^2).
\]
Thus two individually \(O(1)\) local defects can together make only an
\(O(h^2)\) contribution to the global error.

This viewpoint shifts the emphasis from pointwise consistency to the
interaction between localized defects and the global structure of the
discrete operator.  In particular, the cancellation is closely tied
to conservation.  The two equations adjacent to the interface share
a common numerical cut-face flux, which occurs with opposite signs.
When the equations are added, this internal flux cancels exactly.
The remaining combined consistency error is then one order smaller
than the individual interface defects.

This observation leads naturally to a class of conservative interface
schemes.  We formulate an admissible family characterized by a common
cut-face flux, positive face coefficients, \(O(1)\) individual
interface defects, and the combined consistency condition
\[
\tau_I+\tau_{I+1}=O(h).
\]
For every scheme satisfying these structural properties, the discrete
Green's-function argument yields second-order global convergence in the
maximum norm.

The weighted harmonic construction is an important member of this
class.  If
\[
\alpha=x_I+\theta h,
\qquad 0<\theta<1,
\]
the effective coefficient on the cut interval is
\[
\widehat\beta_H
=
\left(
\frac{\theta}{\beta^-}
+
\frac{1-\theta}{\beta^+}
\right)^{-1}.
\]
Together with suitable corrections for the prescribed jumps, this
construction produces two interface-adjacent truncation errors that
are individually \(O(1)\) but whose leading terms cancel in their
sum. Harmonic-type interface discretizations have been studied from
the viewpoint of local flux approximations and stencil constructions;
see, for example, \cite{EwingIlievLazarovNaumovich2007,pan2026harmonic}.
The present analysis provides a complementary viewpoint based on the
discrete Green's function.  This shifts the emphasis from pointwise
consistency to the interaction between localized defects and the
global structure of the discrete operator: conservation controls the
relation between the neighboring interface defects, while the
discrete Green's function determines how those defects are propagated
into the global solution.

The main contributions of this paper can be summarized as follows:
\begin{enumerate}
\item
We derive an explicit discrete Green's function for a general
one-dimensional symmetric flux-form tridiagonal operator with
positive face coefficients and identify the two-plateau structure of
its weighted increments.

\item
We show how the estimates
\[
G_{ij}=O(h),
\qquad
G_{i,j+1}-G_{ij}=O(h^2),
\]
combine with the interface relation
\[
\tau_I+\tau_{I+1}=O(h)
\]
to convert individually \(O(1)\) local interface errors into an
\(O(h^2)\) global contribution.

\item
We formulate a family of admissible conservative interface schemes
for which this mechanism yields second-order maximum-norm
convergence.  The weighted harmonic discretization is included as a
particular member, and its cancellation property is established by
 a direct local consistency and flux-balance analysis.

\item
For Cartesian grids with a flat interface parallel to a coordinate direction, we show that
the conservative cancellation mechanism extends row by row or column
by column, and we identify the additional issues required for a full
multidimensional Green's-function analysis.
\end{enumerate}

The analysis therefore separates three ingredients in the convergence
mechanism: conservative flux balance, cancellation of neighboring
interface defects, and propagation of those defects through the
discrete Green's function.  Second-order pointwise consistency at every
grid point is not required.

The remainder of the paper is organized as follows.
Section~\ref{sec:Green} derives the discrete Green's function and its
structural estimates.  Section~\ref{sec:stability} discusses stability
and conditioning of the flux-form operator.
Section~\ref{sec:interface-error} analyzes the large local interface
errors and their cancellation.
Section~\ref{sec:family} develops the family of admissible
conservative interface schemes.
Section~\ref{sec:2d} discusses the extension of the conservative
cancellation mechanism to Cartesian grids in two dimensions.
Section~\ref{sec:numerics} presents one- and two-dimensional numerical
experiments that directly illustrate the cancellation mechanism,
showing that the individual interface defects remain $O(1)$ while
their combined defect is $O(h)$ and the global error exhibits
second-order convergence.
The detailed local consistency and flux-balance calculation for the
weighted harmonic member is given in Appendix~A.

\section{Discrete Green's Function and Error Representation}
\label{sec:Green}

We begin with the one-dimensional flux-form difference operator that
underlies the interface schemes considered later. Throughout this section, we assume that
\[
\beta_{i+\frac12}>0,
\qquad i=0,\ldots,n.
\]
For a grid function \(U=(U_1,\ldots,U_n)^T\), with
\(U_0=U_{n+1}=0\), the matrix \(A_h\) introduced in the preceding
section acts as
\[
(A_hU)_i
=
-\beta_{i-\frac12}U_{i-1}
+
\left(\beta_{i-\frac12}+\beta_{i+\frac12}\right)U_i
-
\beta_{i+\frac12}U_{i+1},
\qquad 1\le i\le n.
\]
Thus \(A_h\) is the unscaled stiffness matrix associated with the
positive operator \(-(\beta u_x)_x\).  The standard finite-difference
operator is
\[
L_h:=\frac1{h^2}A_h.
\]

The distinction between \(A_h\) and \(L_h\) will be maintained
throughout.  The inverse of \(A_h\) is useful algebraically, whereas
the discrete Green's function entering the truncation-error analysis is
the inverse of \(L_h\).

For later use, introduce the reciprocal conductivities
\begin{equation}\label{eq:reciprocal}
s_k:=\frac1{\beta_{k-\frac12}},
\qquad k=1,\ldots,n+1,
\end{equation}
and the partial sums
\begin{equation}\label{eq:sum}
S(c,d):=
\begin{cases}
\displaystyle\sum_{k=c}^{d}s_k, & c\le d,\\[2mm]
0, & c>d.
\end{cases}
\end{equation}

\subsection{Explicit inverse of the flux-form matrix}

The inverse of \(A_h\) has a particularly simple form.

\begin{theorem}
\label{thm:Ah-inverse}
For \(1\le i,j\le n\),
\[
(A_h^{-1})_{ij}
=
\frac{
S(1,\min\{i,j\})\,
S(\max\{i,j\}+1,n+1)
}{
S(1,n+1)
}.
\]
In particular,
\[
(A_h^{-1})_{ij}>0,
\qquad
(A_h^{-1})_{ij}=(A_h^{-1})_{ji}.
\]
\end{theorem}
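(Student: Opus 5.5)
The plan is to verify directly that the proposed matrix \(B\) with entries
\[
B_{ij}=\frac{S(1,\min\{i,j\})\,S(\max\{i,j\}+1,n+1)}{S(1,n+1)}
\]
satisfies \(A_hB=I\). Since \(A_h\) is square, this identifies \(B=A_h^{-1}\), and invertibility of \(A_h\) comes out of the argument for free. I will check the identity one column at a time. Fix \(j\), and extend the column to a grid function \(g_i:=B_{ij}\) for \(0\le i\le n+1\) by the same formula. Because \(S(1,0)=0\) and \(S(n+2,n+1)=0\), the extension gives \(g_0=g_{n+1}=0\), which is exactly the homogeneous boundary condition built into \(A_h\).

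The key step is the two-plateau computation of weighted increments. For \(0\le i\le n\) I will compute the flux \(\beta_{i+\frac12}(g_{i+1}-g_i)=(g_{i+1}-g_i)/s_{i+1}\), splitting into two cases.
\begin{itemize}
\item For \(i+1\le j\), we have \(g_i=S(1,i)S(j+1,n+1)/S(1,n+1)\). The increment is \(s_{i+1}S(j+1,n+1)/S(1,n+1)\), so the flux equals the constant \(S(j+1,n+1)/S(1,n+1)\).
\item For \(i\ge j\), we have \(g_i=S(1,j)S(i+1,n+1)/S(1,n+1)\). The increment is \(-s_{i+1}S(1,j)/S(1,n+1)\), so the flux equals the constant \(-S(1,j)/S(1,n+1)\).
\end{itemize}
Since \((A_hg)_i\) is the difference of consecutive fluxes, \((A_hg)_i=\beta_{i-\frac12}(g_i-g_{i-1})-\beta_{i+\frac12}(g_{i+1}-g_i)\), it vanishes whenever both fluxes lie on the same plateau, that is, for \(i\neq j\). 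At \(i=j\) the two fluxes straddle the source node, and
\[
(A_hg)_j=\frac{S(j+1,n+1)+S(1,j)}{S(1,n+1)}=1 .
\]
Hence \(A_hB=I\).

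Two things need care. The first is the index bookkeeping at the boundary faces \(i=0\) and \(i=n\): I must confirm that the case split covers them consistently and that the rows \(i=1\) and \(i=n\) of \(A_h\) agree with the extended-function formula once \(g_0=g_{n+1}=0\) is used. The second is the overlap at \(i=j\), where both case formulas for \(g_j\) must agree; they do, since \(\min\) and \(\max\) both equal \(j\) there.

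The remaining claims are then immediate. Positivity holds because every \(s_k>0\), and \(1\le\min\{i,j\}\) and \(\max\{i,j\}+1\le n+1\) make both partial sums nonempty, hence positive. Symmetry holds because the formula depends on \(i,j\) only through \(\min\{i,j\}\) and \(\max\{i,j\}\). I expect no real obstacle beyond keeping the indices straight in the two flux cases.
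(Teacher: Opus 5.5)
Your proof is correct and uses the same mechanism as the paper's: for each column, the weighted flux $\beta_{i+\frac12}(g_{i+1}-g_i)$ is constant on either side of the source node and jumps by exactly $1$ at $i=j$. The only difference is direction. The paper starts from $U=A_h^{-1}e_j$ and solves the flux recurrence, fixing the plateau constant through $\sum_{k=1}^{n+1}s_kq_k=0$ from $U_0=U_{n+1}=0$. You instead verify the stated formula directly, which has the small advantage of proving that $A_h$ is invertible rather than presupposing it; your plateau computation is, up to the factor $h^2$, the identity \eqref{eq:plateau} that the paper records afterwards.
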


\begin{proof}
Fix \(j\), and let
\[
U=A_h^{-1}e_j,
\]
where \(e_j\) is the \(j\)-th coordinate vector.  Introduce the
discrete flux
\[
q_i
:=
\beta_{i-\frac12}(U_i-U_{i-1}),
\qquad
i=1,\ldots,n+1.
\]
The equation \(A_hU=e_j\) is equivalent to
\[
q_i-q_{i+1}=\delta_{ij},
\qquad i=1,\ldots,n.
\]
Hence there is a constant \(c\) such that
\[
q_i=
\begin{cases}
c, & 1\le i\le j,\\
c-1, & j+1\le i\le n+1.
\end{cases}
\]
Since \(U_0=U_{n+1}=0\),
\[
0
=
\sum_{k=1}^{n+1}(U_k-U_{k-1})
=
\sum_{k=1}^{n+1}s_kq_k.
\]
Therefore
\[
c\,S(1,j)+(c-1)S(j+1,n+1)=0,
\]
and consequently
\[
c=\frac{S(j+1,n+1)}{S(1,n+1)}.
\]

If \(i\le j\), then
\[
U_i
=
\sum_{k=1}^{i}s_kq_k
=
\frac{S(1,i)S(j+1,n+1)}
     {S(1,n+1)}.
\]
If \(i>j\), then
\[
U_i
=
cS(1,j)+(c-1)S(j+1,i),
\]
which, after using
\[
S(1,n+1)
=
S(1,j)+S(j+1,i)+S(i+1,n+1),
\]
gives
\[
U_i
=
\frac{S(1,j)S(i+1,n+1)}
     {S(1,n+1)}.
\]
Combining the two cases proves the stated formula.  Positivity and
symmetry are immediate.
\end{proof}

\subsection{The scaled discrete Green's function}

The finite-difference approximation to
\(-(\beta u_x)_x\) is \(L_h=h^{-2}A_h\).  We therefore define the
discrete Green's kernel by
\begin{equation}\label{eq:Green}
G_{ij}:=(L_h^{-1})_{ij}.
\end{equation}
Since
\[
L_h^{-1}=h^2A_h^{-1},
\]
Theorem~\ref{thm:Ah-inverse} immediately gives the following formula.

\begin{proposition}
\label{prop:Green-kernel}
For \(1\le i,j\le n\),
\[
G_{ij}
=
\frac{h^2}{S(1,n+1)}
\begin{cases}
S(1,i)\,S(j+1,n+1), & i\le j,\\[1mm]
S(1,j)\,S(i+1,n+1), & i>j.
\end{cases}
\]
In particular,
\begin{equation}\label{eq:sym_positive}
G_{ij}>0,
\qquad
G_{ij}=G_{ji}.
\end{equation}

If
\[
\Delta_iG^{(j)}:=G_{i+1,j}-G_{ij},
\]
then
\begin{equation}\label{eq:plateau}
\beta_{i+\frac12}\Delta_iG^{(j)}
=
\frac{h^2}{S(1,n+1)}
\begin{cases}
S(j+1,n+1), & i<j,\\[1mm]
-S(1,j), & i\ge j.
\end{cases}
\end{equation}
Consequently, for $1<i<n$
\[
\beta_{i-\frac12}\Delta_{i-1}G^{(j)}
-
\beta_{i+\frac12}\Delta_iG^{(j)}
=
h^2\delta_{ij}.
\]
Thus, for every fixed source index \(j\), the weighted Green's
increments
\[
\beta_{i+\frac12}\Delta_i G^{(j)}
\]
take exactly two constant values: one for \(i<j\) and the other for
\(i\ge j\). We refer to this as the two-plateau structure of the
weighted Green's increments.  This property is independent of any interface location or particular
averaging rule; it follows solely from the one-dimensional
tridiagonal structure of the conservative operator.
\end{proposition}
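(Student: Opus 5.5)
The plan is to read everything off Theorem~\ref{thm:Ah-inverse} together with the identity \(L_h^{-1}=h^2A_h^{-1}\), which holds because \(L_h=h^{-2}A_h\). Multiplying the explicit formula for \((A_h^{-1})_{ij}\) by \(h^2\), and splitting according to whether \(i\le j\) or \(i>j\), gives the displayed formula for \(G_{ij}\) directly. Positivity follows because every \(s_k=1/\beta_{k-\frac12}\) is positive. Hence \(S(1,i)>0\) for \(i\ge1\) and \(S(j+1,n+1)>0\) for \(j\le n\). Symmetry is immediate from the \(\min/\max\) form of the formula.

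For the plateau identity \eqref{eq:plateau}, I will compute \(\Delta_iG^{(j)}=G_{i+1,j}-G_{ij}\) in two cases, using \(S(1,i+1)-S(1,i)=s_{i+1}=1/\beta_{i+\frac12}\).
\begin{itemize}
\item If \(i<j\), then \(i+1\le j\), so both entries lie in the first branch. The difference is
\[
\frac{h^2}{S(1,n+1)}\,s_{i+1}\,S(j+1,n+1).
\]
\item If \(i\ge j\), then \(i+1>j\), so \(G_{i+1,j}\) uses the second branch. If \(i>j\), both entries use the second branch, and the difference is
\[
\frac{h^2}{S(1,n+1)}\,S(1,j)\bigl(S(i+2,n+1)-S(i+1,n+1)\bigr)
=-\frac{h^2}{S(1,n+1)}\,S(1,j)\,s_{i+1}.
\]
\end{itemize}

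The boundary case \(i=j\) is the only point needing care, because there \(G_{jj}\) uses the first branch while \(G_{j+1,j}\) uses the second. Both branches agree on the diagonal, since each gives \(G_{jj}=h^2 S(1,j)S(j+1,n+1)/S(1,n+1)\). The same telescoping therefore still produces \(-S(1,j)s_{j+1}\) (times \(h^2/S(1,n+1)\)). Multiplying each case by \(\beta_{i+\frac12}=1/s_{i+1}\) yields \eqref{eq:plateau}. The range of \(i\) is \(0\le i\le n\) if we use the convention \(G_{0j}=G_{n+1,j}=0\); this convention agrees with the formulas because \(S(1,0)=0\) and \(S(n+2,n+1)=0\).

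Finally, the discrete equation follows by subtracting consecutive plateau values. The difference \(\beta_{i-\frac12}\Delta_{i-1}G^{(j)}-\beta_{i+\frac12}\Delta_iG^{(j)}\) vanishes unless \(i-1<j\le i\), that is, unless \(i=j\). In that case it equals
\[
\frac{h^2}{S(1,n+1)}\bigl(S(j+1,n+1)+S(1,j)\bigr)=h^2.
\]
Equivalently, this is just the statement \(A_hG^{(j)}=h^2e_j\), written in flux form. The step I expect to be the main obstacle is only this bookkeeping at \(i=j\) and at the boundary indices. Everything else is a direct telescoping.
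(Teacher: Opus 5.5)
Your proposal is correct and follows the paper's own proof: read off $G_{ij}$ from Theorem~\ref{thm:Ah-inverse} via $L_h^{-1}=h^2A_h^{-1}$, compute the increments using $S(1,i+1)-S(1,i)=s_{i+1}=1/\beta_{i+\frac12}$, and obtain the discrete identity from the jump $h^2\bigl(S(j+1,n+1)+S(1,j)\bigr)/S(1,n+1)=h^2$ between the two plateaus at $i=j$. Your explicit check of the diagonal case $i=j$ and of the convention $G_{0j}=G_{n+1,j}=0$ fills in details the paper leaves implicit; that convention in fact extends the final identity to all $1\le i\le n$.
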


\begin{proof}
The formula for \(G_{ij}\) follows directly from
\[
L_h^{-1}=h^2A_h^{-1}
\]
and Theorem~\ref{thm:Ah-inverse}.  For \(i<j\),
\[
\Delta_iG^{(j)}
=
\frac{h^2}{S(1,n+1)}
\bigl(S(1,i+1)-S(1,i)\bigr)
S(j+1,n+1),
\]
and hence
\[
\beta_{i+\frac12}\Delta_iG^{(j)}
=
\frac{h^2S(j+1,n+1)}
     {S(1,n+1)}.
\]
Similarly, if \(i\ge j\),
\[
\beta_{i+\frac12}\Delta_iG^{(j)}
=
-\frac{h^2S(1,j)}
       {S(1,n+1)}.
\]
The last identity follows from the jump between these two constant
values at \(i=j\).
\end{proof}

\subsection{Bounds for the discrete Green's kernel}

\begin{corollary}
\label{cor:Green-bounds}
Let
\[
\beta_{\min}
:=
\min_{0\le i\le n}\beta_{i+\frac12}>0.
\]
There exists a constant \(C\), depending only on \(b-a\) and
\(\beta_{\min}\), such that
\[
0<G_{ij}\le Ch,
\qquad
1\le i,j\le n,
\]
and
\[
|G_{i+1,j}-G_{ij}|
\le
\frac{h^2}{\beta_{\min}}.
\]

By the symmetry \(G_{ij}=G_{ji}\), the same estimate also holds
for increments in the second index:
\[
|G_{i,j+1}-G_{ij}|
\le
\frac{h^2}{\beta_{\min}}.
\]

Moreover,
\[
\|L_h^{-1}\|_{\infty}
\le
\frac{(b-a)^2}{4\beta_{\min}}.
\]
\end{corollary}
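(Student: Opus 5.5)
The plan is to read every estimate directly off the explicit formula in Proposition~\ref{prop:Green-kernel} and the plateau identity \eqref{eq:plateau}. The only facts needed are that \(0<s_k\le 1/\beta_{\min}\) for every \(k\), and that \((n+1)h=b-a\).

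\emph{Pointwise bound.} Fix \(i\le j\); the case \(i>j\) follows by symmetry. Set \(P:=S(1,i)\) and \(Q:=S(j+1,n+1)\). Since all \(s_k>0\), we have \(P+Q\le S(1,n+1)\), and therefore
\[
\frac{PQ}{S(1,n+1)}\le \frac{PQ}{P+Q}\le \frac{P+Q}{4}\le \frac{S(1,n+1)}{4}\le \frac{n+1}{4\beta_{\min}}.
\]
The second inequality is AM--GM in the form \(4PQ\le (P+Q)^2\). Multiplying by \(h^2\) and using \((n+1)h=b-a\) gives
\[
0<G_{ij}\le \frac{(b-a)}{4\beta_{\min}}\,h,
\]
so we may take \(C=(b-a)/(4\beta_{\min})\). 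Positivity is already contained in \eqref{eq:sym_positive}.

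\emph{Increment bounds.} By \eqref{eq:plateau}, \(|\beta_{i+\frac12}\Delta_iG^{(j)}|\) equals \(h^2\) times either \(S(j+1,n+1)/S(1,n+1)\) or \(S(1,j)/S(1,n+1)\). Both ratios lie in \([0,1]\). Dividing by \(\beta_{i+\frac12}\ge\beta_{\min}\) yields \(|G_{i+1,j}-G_{ij}|\le h^2/\beta_{\min}\). Because \(G_{ij}=G_{ji}\), the same bound transfers to increments in the second index. (The plateau identity is stated for \(0\le i\le n\) with \(G_{0j}=G_{n+1,j}=0\); only the interior range \(1\le i\le n-1\) is needed here.)

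\emph{Maximum-norm stability.} All entries of \(L_h^{-1}\) are positive, so \(\|L_h^{-1}\|_\infty=\max_i\sum_j G_{ij}\). Summing the pointwise bound over \(n\) columns gives
\[
\sum_{j=1}^n G_{ij}\le n\cdot\frac{(b-a)h}{4\beta_{\min}}\le \frac{(n+1)h\,(b-a)}{4\beta_{\min}}=\frac{(b-a)^2}{4\beta_{\min}}.
\]

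The only nonroutine point is the first step. One must avoid bounding \(S(1,n+1)\) from below, since that would introduce a dependence on \(\max\beta\). Using \(P+Q\le S(1,n+1)\) together with AM--GM removes the denominator cleanly, and it also produces the sharp constant \(1/4\) in the stability estimate.
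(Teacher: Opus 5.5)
Your proof is correct and follows the paper's argument essentially step for step: the reduction $S(1,n+1)\ge P+Q$, the bound $PQ/(P+Q)\le (P+Q)/4$ with $s_k\le 1/\beta_{\min}$ for the norm estimate, and the plateau identity for the increments. The only difference is that you also use the AM--GM bound for the pointwise estimate, which gives $C=(b-a)/(4\beta_{\min})$, whereas the paper uses $PQ/(P+Q)\le\min\{P,Q\}$ and settles for $C=(b-a)/\beta_{\min}$. One side remark is wrong, though it does not affect the proof: the constant $1/4$ is not sharp. Writing $A_h^{-1}\mathbf{1}$ in flux form actually gives $\|L_h^{-1}\|_\infty\le (b-a)^2/(8\beta_{\min})$, with equality for constant $\beta$ when the midpoint is a grid node, so your AM--GM step loses a factor of $2$.
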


\begin{proof}
From Proposition~\ref{prop:Green-kernel},
\[
G_{ij}
=
h^2
\frac{XY}{S(1,n+1)},
\]
where
\[
X=S(1,\min\{i,j\}),
\qquad
Y=S(\max\{i,j\}+1,n+1).
\]
Since \(S(1,n+1)\ge X+Y\),
\[
\frac{XY}{S(1,n+1)}
\le
\frac{XY}{X+Y}
\le
\min\{X,Y\}.
\]
Because each \(s_k\le\beta_{\min}^{-1}\),
\[
G_{ij}\le
\frac{(n+1)h^2}{\beta_{\min}}
=
\frac{(b-a)h}{\beta_{\min}}.
\]
This gives the first estimate.

The increment formula in
Proposition~\ref{prop:Green-kernel} gives
\[
|\Delta_iG^{(j)}|
\le
\frac{h^2}{\beta_{i+\frac12}}
\le
\frac{h^2}{\beta_{\min}}.
\]

For the matrix norm, using
\[
\frac{XY}{X+Y}\le \frac{X+Y}{4},
\]
one obtains
\[
(A_h^{-1})_{ij}
\le
\frac{
\min\{i,j\}+n+1-\max\{i,j\}
}{
4\beta_{\min}
}
\le
\frac{n+1}{4\beta_{\min}}.
\]
Hence
\[
\|A_h^{-1}\|_\infty
\le
\frac{n(n+1)}{4\beta_{\min}}.
\]
Multiplication by \(h^2\), together with
\(h=(b-a)/(n+1)\), gives
\[
\|L_h^{-1}\|_\infty
=
h^2\|A_h^{-1}\|_\infty
\le
\frac{(b-a)^2}{4\beta_{\min}}.
\]
\end{proof}

The particular constant in the last estimate is not essential for
the subsequent analysis.  What will matter is that
\[
\|L_h^{-1}\|_\infty=O(1),\qquad
G_{ij}=O(h),\qquad
G_{i+1,j}-G_{ij}=O(h^2).
\]

\subsection{Error representation and cancellation structure}

Let \(u_h=(u(x_1),...,u(x_n))^T\) denote the exact solution sampled at the grid
points, and let \(U=(U_1,...,U_n)^T\) be the numerical solution.  Following the
notation introduced in \eqref{eq:generalsys}, let
\begin{equation}\label{eq:discrete-rhs}
r_h=(r_1^h,\ldots,r_n^h)^T
\end{equation}
denote the assembled discrete right-hand side, so that
\[
L_hU=r_h.
\]

Define
\begin{equation}\label{eq:global-error}
e_i:=U_i-u_i,\quad u_i=u(x_i).
\end{equation}
The local truncation error is
\begin{equation}\label{eq:lte}
\tau:=L_hu_h-r_h,
\end{equation}
and hence
\[
L_he
=
L_h(U-u_h)
=
r_h-L_hu_h
=
-\tau.
\]
Equivalently,
\[
e_i
=
-\sum_{j=1}^{n}G_{ij}\tau_j.
\]
The overall sign is immaterial for the estimates below; what is
important is the way in which localized components of \(\tau\) are
weighted by the Green's kernel.

Suppose, in particular, that two neighboring truncation errors
\(\tau_I\) and \(\tau_{I+1}\) satisfy
\[
\tau_I=O(1),
\qquad
\tau_{I+1}=O(1),
\qquad
\tau_I+\tau_{I+1}=O(h).
\]
Their contribution to the error at node \(i\) can be written as
\[
G_{iI}\tau_I+G_{i,I+1}\tau_{I+1}
=
G_{iI}(\tau_I+\tau_{I+1})
+
\bigl(G_{i,I+1}-G_{iI}\bigr)\tau_{I+1}.
\]
By Corollary~\ref{cor:Green-bounds},
\[
G_{iI}=O(h),
\qquad
G_{i,I+1}-G_{iI}=O(h^2).
\]
Therefore
\[
G_{iI}\tau_I+G_{i,I+1}\tau_{I+1}
=
O(h)O(h)+O(h^2)O(1)
=
O(h^2).
\]

This identity isolates the Green's-function mechanism used throughout
the remainder of the paper: the small variation of the Green's kernel
between neighboring source nodes converts cancellation of the two
interface defects into a second-order contribution to the global
error. It explains how two
interface-adjacent truncation errors may each be \(O(1)\), while
their combined effect on the global solution remains \(O(h^2)\).
The essential ingredients are the conservative flux structure of
the discrete operator, the two-plateau behavior of its Green's
increments, and the cancellation of the leading interface defects.

\section{Stability and Conditioning}
\label{sec:stability}

The explicit discrete Green's function obtained in
Section~\ref{sec:Green} gives an immediate maximum-norm stability
estimate for the scaled difference operator
\[
L_h=\frac1{h^2}A_h.
\]
Indeed, Corollary~\ref{cor:Green-bounds} yields
\[
\|L_h^{-1}\|_\infty
\le
\frac{(b-a)^2}{4\beta_{\min}},
\qquad
\beta_{\min}
=
\min_{0\le i\le n}\beta_{i+\frac12}.
\]
Thus the discrete solution of
\[
L_hU=F
\]
satisfies
\[
\|U\|_\infty
\le
C\|F\|_\infty,
\]
where \(C\) is independent of the mesh size.

This distinction between \(A_h\) and \(L_h=h^{-2}A_h\) is important.
The inverse of the unscaled stiffness matrix \(A_h\) is not uniformly
bounded as \(h\to0\); rather,
\[
A_h^{-1}=h^{-2}L_h^{-1}.
\]
The mesh-independent stability estimate belongs to the scaled
finite-difference operator \(L_h\).

We next record how these stability and conditioning properties depend
on the coefficient contrast. Suppose that
\[
0<\beta_{\min}\le \beta_{i+\frac12}\le\beta_{\max}.
\]
The maximum-norm stability estimate depends only on the lower
ellipticity bound:
\[
\|L_h^{-1}\|_\infty
\le
\frac{(b-a)^2}{4\beta_{\min}}.
\]
In particular, for a fixed positive \(\beta_{\min}\), increasing some
of the conductivities does not destroy the uniform stability estimate.

The spectral condition number, however, depends on both the mesh size
and the coefficient contrast.  Since
\[
v^TA_hv
=
\sum_{i=0}^{n}
\beta_{i+\frac12}(v_{i+1}-v_i)^2,
\qquad
v_0=v_{n+1}=0,
\]
comparison with the constant-coefficient discrete Laplacian gives
\[
\beta_{\min}\lambda_k(T_h)
\le
\lambda_k(A_h)
\le
\beta_{\max}\lambda_k(T_h),
\]
where
\[
T_h=
\begin{pmatrix}
2&-1\\
-1&2&-1\\
&\ddots&\ddots&\ddots\\
&&-1&2
\end{pmatrix}.
\]
Consequently,
\[
\kappa_2(A_h)
\le
\frac{\beta_{\max}}{\beta_{\min}}
\kappa_2(T_h).
\]
Because multiplication by \(h^{-2}\) does not change a spectral
condition number,
\[
\kappa_2(L_h)=\kappa_2(A_h).
\]
Using
\[
\lambda_k(T_h)
=
4\sin^2\left(\frac{k\pi}{2(n+1)}\right),
\]
we obtain
\[
\kappa_2(L_h)
=
O\left(
\frac{\beta_{\max}}{\beta_{\min}}\,h^{-2}
\right).
\]

Thus two different notions should be distinguished.  The inverse
operator \(L_h^{-1}\) is uniformly bounded in the maximum norm under
a fixed lower ellipticity bound, while the algebraic condition number
of the discrete linear system grows like \(h^{-2}\) and may also
deteriorate with the coefficient contrast.


For the interface schemes considered below, the cut-face coefficient
is assumed positive.  In particular, the weighted harmonic
coefficient has this property whenever the physical coefficients on
the two sides of the interface are positive.  The resulting matrix
therefore retains the symmetric positive definite flux form analyzed
above.

Accordingly, the large local truncation errors at the two nodes
adjacent to the interface are not a consequence of instability of
the discrete operator.  Their global effect is instead governed by
the Green's-function cancellation mechanism identified in
Section~\ref{sec:Green}.  We now verify that the interface
discretization has precisely the required truncation-error structure.

\section{Large Local Errors and Their Cancellation}
\label{sec:interface-error}

We now apply the discrete Green's-function framework developed in
Section~\ref{sec:Green} to interface discretizations with large local
errors near the interface.  The main point is that second-order global accuracy
does not require second-order local consistency at the two grid points
adjacent to the interface.  Instead, it is sufficient that the two
large local defects satisfy a suitable cancellation relation.

Let
\[
\alpha=x_I+\theta h,
\qquad 0<\theta<1,
\]
so that
\[
x_I<\alpha<x_{I+1}.
\]
Assume
\[
\beta(x)=
\begin{cases}
\beta^-, & x<\alpha,\\
\beta^+, & x>\alpha,
\end{cases}
\qquad
\beta^\pm>0,
\]
and let the interface conditions be
\[
[u]_\alpha=w,
\qquad
[\beta u_x]_\alpha=v.
\]

For the weighted harmonic interface scheme considered below, the
effective coefficient on the cut interval is
\begin{equation}\label{eq:hatbeta}
\widehat\beta_H
=
\left(
\frac{\theta}{\beta^-}
+
\frac{1-\theta}{\beta^+}
\right)^{-1},
\end{equation}
with the prescribed solution and flux jumps incorporated through the
corresponding correction terms.  A broader class of conservative
interface schemes will be developed independently in Section~\ref{sec:family}.

\subsection{Interface truncation errors}

Let \(L_h\) denote the scaled discrete operator introduced in
Section~\ref{sec:Green}.  We now examine the truncation errors
associated with the interface discretization.  At an ordinary grid
point \(x_i\), away from the interface, the discrete right-hand side
satisfies
\[
r_i^h=f(x_i),
\]
and the standard local consistency calculation gives
\[
\tau_i=O(h^2).
\] 

For the weighted harmonic scheme, the precise assembled right-hand-side
terms corresponding to \(r_I^h\) and \(r_{I+1}^h\) are given in
Appendix~\ref{app:interface-cancellation}.

At the two grid points \(x_I\) and \(x_{I+1}\) adjacent to the
interface, the assembled right-hand side contains the jump corrections
associated with the cut-face flux.  The corresponding truncation
errors \(\tau_I\) and \(\tau_{I+1}\) require a separate analysis.
The relevant cancellation assumption is the following.

\begin{assumption}[Interface cancellation property]
\label{ass:interface-cancellation}
The interface corrections are chosen so that
\[
\tau_I=O(1),
\qquad
\tau_{I+1}=O(1),
\]
and
\[
\tau_I+\tau_{I+1}=O(h).
\]
\end{assumption}

Thus the two exceptional truncation errors may remain individually
large as \(h\to0\), but their leading terms cancel when the two
defects are combined.

For the weighted harmonic interface scheme, this property follows
from a direct one-sided Taylor expansion about the interface.

\begin{lemma}[Cancellation for the weighted harmonic scheme]
\label{lem:harmonic-cancellation}
Let the weighted harmonic coefficient be as defined in \eqref{eq:hatbeta}, and let
the jump corrections be those obtained by matching the prescribed
conditions
\[
[u]_\alpha=w,
\qquad
[\beta u_x]_\alpha=v.
\]
If
\[
u^-\in C^3([x_{I-1},\alpha]),
\qquad
u^+\in C^3([\alpha,x_{I+2}]),
\] then
\[
\tau_I=O(1),
\qquad
\tau_{I+1}=O(1),
\qquad
\tau_I+\tau_{I+1}=O(h).
\]
\end{lemma}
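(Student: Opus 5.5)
We need to pin down the scheme first: the exact correction terms live in the Appendix, so I will fix them by the natural construction. The cut-face flux is
\[
F^h_{I+\frac12}=\widehat\beta_H\,\frac{U_{I+1}-U_I-C_w}{h}+C_v ,
\]
where $C_w,C_v$ are chosen so that this is exact for the piecewise linear function with the prescribed jumps. Concretely, write $u^-(x)\approx u_\alpha^-+p^-(x-\alpha)$ and $u^+(x)\approx u_\alpha^-+w+p^+(x-\alpha)$, with $\beta^+p^+-\beta^-p^-=v$. Then
\[
u^+(x_{I+1})-u^-(x_I)-w=(1-\theta)h\,p^+ +\theta h\,p^- .
\]
Eliminating $p^+$ through $p^+=(v+\beta^-p^-)/\beta^+$ and multiplying by $\widehat\beta_H/h$ gives
\[
\beta^-p^-=\widehat\beta_H\,\frac{u(x_{I+1})-u(x_I)-w}{h}-\widehat\beta_H\frac{(1-\theta)}{\beta^+}\,v .
\]
This fixes $C_w=w$ and $C_v=-\widehat\beta_H(1-\theta)v/\beta^+$, where $C_v$ is the flux measured on the minus side, $\beta^-u_x(\alpha^-)$. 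The equation at $x_I$ uses this cut flux as its right flux. The equation at $x_{I+1}$ uses the same cut flux plus $v$ as its left flux, since the physical flux on the plus side is $\beta^+u_x(\alpha^+)=\beta^-u_x(\alpha^-)+v$. The exact bookkeeping of which side carries $v$ must match the Appendix. Any consistent choice works, because what matters is that the two equations share one numerical flux $\mathcal F_h$ with opposite signs.

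\textbf{Step 1: Taylor expansion of the shared flux.} I will set $\tau_I=h^{-1}(\mathcal F^{\rm ex}_{I-\frac12}-\mathcal F_h[u])-f(x_I)$ and $\tau_{I+1}=h^{-1}(\mathcal F_h[u]+v-\mathcal F^{\rm ex}_{I+\frac32})-f(x_{I+1})$, where $\mathcal F^{\rm ex}$ denotes the ordinary difference fluxes on uncut intervals. Using $u^\pm\in C^3$, I will expand $u(x_I)$ and $u(x_{I+1})$ about $\alpha$ to second order. With the matching above, the linear parts are exact, and the remainder satisfies
\[
\mathcal F_h[u]=\beta^-u_x(\alpha^-)+E,\qquad E=O(h),
\]
with $E$ built from $h\theta^2u_{xx}^-$ and $h(1-\theta)^2u_{xx}^+$ terms.

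\textbf{Step 2: size of each defect.} The ordinary fluxes satisfy
\[
\mathcal F^{\rm ex}_{I-\frac12}=\beta^-u_x(x_{I-\frac12})+O(h^2)=\beta^-u_x(\alpha^-)-\beta^-(\theta+\tfrac12)h\,u_{xx}^-+O(h^2),
\]
and similarly on the right. Hence
\[
\tau_I=h^{-1}\bigl(-(\theta+\tfrac12)h\beta^-u^-_{xx}-E\bigr)-f(x_I)+O(h),
\]
which is $O(1)$ because $E/h=O(1)$. The same argument gives $\tau_{I+1}=O(1)$. This is the first claim.

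\textbf{Step 3: the sum telescopes.} Adding the two equations cancels the shared flux $\mathcal F_h[u]$ exactly; the $v$ term then combines with $\beta^-u_x(\alpha^-)$ to give $\beta^+u_x(\alpha^+)$. We obtain
\[
\tau_I+\tau_{I+1}=h^{-1}\bigl(\mathcal F^{\rm ex}_{I-\frac12}-\mathcal F^{\rm ex}_{I+\frac32}+v\bigr)-f(x_I)-f(x_{I+1}).
\]
Each ordinary flux approximates the true flux at its midpoint to $O(h^2)$. Integrating $-(\beta u_x)_x=f$ over $(x_{I-\frac12},\alpha)$ and $(\alpha,x_{I+\frac32})$ and using the flux jump $v$ gives
\[
\mathcal F^{\rm ex}_{I-\frac12}-\mathcal F^{\rm ex}_{I+\frac32}+v=\int_{x_{I-1/2}}^{x_{I+3/2}}f\,dx+O(h^2)=h\bigl(f(x_I)+f(x_{I+1})\bigr)+O(h^2).
\]
Dividing by $h$ yields $\tau_I+\tau_{I+1}=O(h)$, with no need to track $E$.

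The main obstacle is Step 1, together with matching the correction terms exactly to the Appendix's assembled $r_I^h,r_{I+1}^h$ so that the shared flux really is common to both equations. Once that common-flux structure is confirmed, the conservation argument in Step 3 is robust. The $O(1)$ bounds in Step 2 only require that the cut flux be first-order accurate, which the weighted harmonic matching guarantees.
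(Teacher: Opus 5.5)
Your flux setup and Steps 1--2 are fine. In fact your bookkeeping agrees with the paper's. The appendix terms $\frac{h_\ell}{h}C_I=-\frac{\widehat\beta_H}{h^2}\bigl(w+\frac{\delta_r}{\beta^+}v\bigr)$ and $\frac{h_r}{h}C_{I+1}=\frac{\widehat\beta_H}{h^2}\bigl(w-\frac{\delta_\ell}{\beta^-}v\bigr)$ turn the cut flux into exactly your $\mathcal F_h$ at $x_I$, and into $\mathcal F_h+v$ at $x_{I+1}$ (use $\widehat\beta_H\theta/\beta^-=1-\widehat\beta_H(1-\theta)/\beta^+$).

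The genuine gap is the last equality in Step 3:
\[
\int_{x_{I-\frac12}}^{x_{I+\frac32}} f\,dx=h\bigl(f(x_I)+f(x_{I+1})\bigr)+O(h^2).
\]
This is false in general. Since $f^\pm=-\beta^\pm u^\pm_{xx}$ and the jump conditions say nothing about second derivatives, $[f]_\alpha=\beta^-u^-_{xx}(\alpha)-\beta^+u^+_{xx}(\alpha)$ is generically nonzero. On $(x_{I-\frac12},\alpha)$, of length $h_\ell=(\theta+\frac12)h$, one has $f\approx f^-(\alpha)$. On $(\alpha,x_{I+\frac32})$, of length $h_r=(\frac32-\theta)h$, one has $f\approx f^+(\alpha)$. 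Hence the integral equals $h_\ell f(x_I)+h_rf(x_{I+1})+O(h^2)$, which differs from your expression by $(\frac12-\theta)h[f]_\alpha$.

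Consequently, for the scheme you wrote down, with unweighted sources $f(x_I)$ and $f(x_{I+1})$,
\[
\tau_I+\tau_{I+1}=\bigl(\tfrac12-\theta\bigr)[f]_\alpha+O(h),
\]
which is $O(1)$, not $O(h)$. You can check this directly. For $\beta^-=\beta^+$ and $w=v=0$ your scheme is the standard three-point Laplacian, and Taylor expansion gives $\tau_I=\frac{(1-\theta)^2}{2}[f]_\alpha+O(h)$ and $\tau_{I+1}=-\frac{\theta^2}{2}[f]_\alpha+O(h)$. So your remark that ``any consistent choice works'' as long as the two equations share a flux is not right. Conservation of the cut flux alone does not give the combined $O(h)$ estimate.

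The missing ingredient is the cut control volumes built into the paper's scheme. The assembled right-hand sides are
\[
r_I^h=\frac{h_\ell}{h}(f_I+C_I),\qquad r_{I+1}^h=\frac{h_r}{h}(f_{I+1}+C_{I+1}),
\]
so node $I$ owns $[x_{I-\frac12},\alpha]$ and node $I+1$ owns $[\alpha,x_{I+\frac32}]$. In your notation, replace $f(x_I)$ and $f(x_{I+1})$ in the definitions of $\tau_I,\tau_{I+1}$ by $\frac{h_\ell}{h}f(x_I)$ and $\frac{h_r}{h}f(x_{I+1})$. The individual defects remain $O(1)$. The telescoped sum then becomes
\[
\frac1h\Bigl(\int_{x_{I-\frac12}}^{x_{I+\frac32}}f\,dx-h_\ell f_I-h_rf_{I+1}\Bigr)+O(h)=O(h),
\]
because the $\pm v$ contributions cancel exactly as in your Step 3. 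With this change your argument coincides with the appendix proof.
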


\begin{proof}
The detailed one-sided Taylor expansion and flux-balance calculation
is given in Appendix~\ref{app:interface-cancellation}.
\end{proof}

The preceding lemma verifies
Assumption~\ref{ass:interface-cancellation} for the weighted harmonic
scheme.  Other conservative interface schemes may be treated in the same
framework whenever they satisfy these three truncation-error
relations; a broader class will be formulated in
Section~\ref{sec:family}.

\subsection{Green's-function reduction of the interface error}

Let
\[
G_{ij}=(L_h^{-1})_{ij}
\]
be the discrete Green's kernel from Section~\ref{sec:Green}.  If
\[
e=U-u_h,
\]
then
\[
L_he=-\tau,
\]
and hence
\[
e_i
=
-\sum_{j=1}^{n}G_{ij}\tau_j.
\]

Separate the regular-grid and interface contributions:
\[
e_i
=
-\sum_{\substack{j=1\\j\ne I,I+1}}^n
G_{ij}\tau_j
-
G_{iI}\tau_I
-
G_{i,I+1}\tau_{I+1}.
\]

The two exceptional terms may be rearranged as
\begin{equation}\label{eq:central}
G_{iI}\tau_I+G_{i,I+1}\tau_{I+1}
=
G_{iI}(\tau_I+\tau_{I+1})
+
\bigl(G_{i,I+1}-G_{iI}\bigr)\tau_{I+1}.
\end{equation}
By Corollary~\ref{cor:Green-bounds},
\[
G_{iI}=O(h),
\qquad
G_{i,I+1}-G_{iI}=O(h^2).
\]
Therefore, under
Assumption~\ref{ass:interface-cancellation},
\[
G_{iI}(\tau_I+\tau_{I+1})
=
O(h)O(h)
=
O(h^2),
\]
and
\[
\bigl(G_{i,I+1}-G_{iI}\bigr)\tau_{I+1}
=
O(h^2)O(1)
=
O(h^2).
\]
Hence
\[
G_{iI}\tau_I+G_{i,I+1}\tau_{I+1}
=
O(h^2).
\]

The large local interface defects therefore make only a second-order
contribution to the global error.

\subsection{Second-order global convergence}

We can now state the convergence result under the cancellation
structure identified above.

\begin{theorem}[Second-order accuracy under interface cancellation]
\label{thm:cancellation-second-order}
Assume that the discrete operator has the conservative flux form of
Section~\ref{sec:Green}, that
\[
\tau_i=O(h^2),
\qquad
i\ne I,I+1,
\]
and that the two interface truncation errors satisfy
Assumption~\ref{ass:interface-cancellation}.  Then
\[
\|U-u_h\|_\infty
\le Ch^2,
\]
where \(C\) is independent of \(h\).
\end{theorem}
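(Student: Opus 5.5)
The proof will go through the Green's-function error representation of Section~\ref{sec:Green}. Since $L_h e=-\tau$ with $e=U-u_h$, we have
\[
e_i=-\sum_{j=1}^n G_{ij}\tau_j .
\]
I will split this sum into the regular part $j\ne I,I+1$ and the two interface terms, exactly as in the decomposition preceding the theorem, and bound each part uniformly in $i$. The constants involved are those of Corollary~\ref{cor:Green-bounds}, which depend only on $b-a$ and $\beta_{\min}$. For the weighted harmonic scheme, $\beta_{\min}=\min\{\beta^-,\beta^+,\widehat\beta_H\}$, and this is independent of $h$ because $\widehat\beta_H\ge\min\{\beta^-,\beta^+\}$.

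\textbf{Step 1: regular part.} I will define the vector $\tau^{\mathrm{reg}}$ by $\tau^{\mathrm{reg}}_j=\tau_j$ for $j\ne I,I+1$ and $\tau^{\mathrm{reg}}_I=\tau^{\mathrm{reg}}_{I+1}=0$. Then
\[
\Bigl|\sum_{j\ne I,I+1}G_{ij}\tau_j\Bigr|=\bigl|(L_h^{-1}\tau^{\mathrm{reg}})_i\bigr|\le \|L_h^{-1}\|_\infty\,\|\tau^{\mathrm{reg}}\|_\infty\le \frac{(b-a)^2}{4\beta_{\min}}\,C_1h^2 .
\]
Here $C_1$ is the uniform constant in the hypothesis $\tau_i=O(h^2)$. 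It is important to use the stability bound here and not the pointwise estimate $G_{ij}=O(h)$. The latter would give $n\cdot O(h)\cdot O(h^2)=O(h^2)$ as well, but only by carefully using $nh=b-a-h$. Either route works, and the norm bound is cleaner.

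\textbf{Step 2: interface part.} I will use identity \eqref{eq:central}:
\[
G_{iI}\tau_I+G_{i,I+1}\tau_{I+1}=G_{iI}(\tau_I+\tau_{I+1})+(G_{i,I+1}-G_{iI})\tau_{I+1}.
\]
Assumption~\ref{ass:interface-cancellation} gives $|\tau_I+\tau_{I+1}|\le C_2h$ and $|\tau_{I+1}|\le C_3$. Corollary~\ref{cor:Green-bounds} gives $0<G_{iI}\le (b-a)h/\beta_{\min}$, and, through symmetry, $|G_{i,I+1}-G_{iI}|\le h^2/\beta_{\min}$. Combining these, the interface part is bounded by
\[
\bigl((b-a)C_2+C_3\bigr)\frac{h^2}{\beta_{\min}} .
\]

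\textbf{Step 3: conclusion.} Adding the two bounds and taking the maximum over $i$ gives $\|U-u_h\|_\infty\le Ch^2$, with $C$ depending only on $b-a$, $\beta_{\min}$ and the constants $C_1,C_2,C_3$.

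The only delicate point, and the one I expect to be the main obstacle, is uniformity. The $O(\cdot)$ constants in the hypotheses must be independent of $h$, of the interface index $I$ and of the offset $\theta$. The lower bound $\beta_{\min}$ must also stay bounded away from zero as $h\to0$. I will make these assumptions explicit, namely uniform constants in the $O$-symbols and a fixed positive lower bound on all face coefficients including the cut face. With these in place, the argument is a direct assembly of Corollary~\ref{cor:Green-bounds} and \eqref{eq:central}.
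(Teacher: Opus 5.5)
Your proposal is correct and follows the paper's proof. The regular truncation errors are controlled by the mesh-independent bound on $\|L_h^{-1}\|_\infty$. The two interface terms are handled by the rearrangement \eqref{eq:central} together with $G_{iI}=O(h)$ and $G_{i,I+1}-G_{iI}=O(h^2)$ from Corollary~\ref{cor:Green-bounds}. Your explicit constants, and your point that $\beta_{\min}$ (including the cut-face coefficient) and the $O$-constants must be uniform in $h$, make precise what the paper's brief proof leaves implicit.
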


\begin{proof}
For the regular nodes,
\[
|\tau_i|\le Ch^2,
\qquad
i\ne I,I+1.
\]
Using the maximum-norm stability estimate
\[
\|L_h^{-1}\|_\infty\le C,
\]
their total contribution is \(O(h^2)\).

The contribution from the two interface nodes is \(O(h^2)\) by the
Green's-function decomposition above.  Combining the regular and
interface contributions proves the result.
\end{proof}

As an immediate consequence, the weighted harmonic interface scheme
satisfies the same second-order estimate.

\begin{corollary}
\label{cor:harmonic-second-order}
Under the regularity assumptions of
Lemma~\ref{lem:harmonic-cancellation}, the weighted harmonic interface
scheme satisfies
\[
\|U-u_h\|_\infty\le Ch^2.
\]
\end{corollary}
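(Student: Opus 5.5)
The corollary follows by checking the hypotheses of Theorem~\ref{thm:cancellation-second-order} for the weighted harmonic scheme and then invoking that theorem. I will verify three things in turn: that the assembled operator has the conservative flux form of Section~\ref{sec:Green} with positive face coefficients, that the truncation error is $O(h^2)$ at the regular nodes $i\ne I,I+1$, and that Assumption~\ref{ass:interface-cancellation} holds at the two interface-adjacent nodes.

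For the first point, the face coefficients away from the cut interval are $\beta^-$ or $\beta^+$, both positive. On the cut interval the coefficient is $\widehat\beta_H$ from \eqref{eq:hatbeta}, a weighted harmonic mean of $\beta^\pm$, so it lies between $\min\{\beta^-,\beta^+\}$ and $\max\{\beta^-,\beta^+\}$. Hence
\[
\beta_{\min}\ge \min\{\beta^-,\beta^+\},
\]
a bound independent of both $h$ and $\theta$. Consequently the constants in Corollary~\ref{cor:Green-bounds} are uniform in $h$, which is needed so that the final $C$ does not depend on the mesh or on the position of the interface within its cell.

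For the second point, at a node $x_i$ with $i\le I-1$ or $i\ge I+2$ the three-point stencil uses only values on one side of $\alpha$ and a constant coefficient. A standard Taylor expansion therefore gives $\tau_i=O(h^2)$ with constant proportional to $\|u^{\pm}\|_{C^4}$ on the relevant subinterval. Strictly this needs $C^4$ regularity away from the interface, slightly more than the $C^3$ near the interface stated in Lemma~\ref{lem:harmonic-cancellation}. I will read "the regularity assumptions" as including the usual piecewise smoothness of $u$ on each subdomain, which the paper implicitly assumes when it says the ordinary nodes satisfy $\tau_i=O(h^2)$.

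The third point is exactly Lemma~\ref{lem:harmonic-cancellation}, whose Taylor and flux-balance calculation is in the appendix. This is the only nontrivial ingredient, and the main obstacle lies there, not in the corollary itself. The delicate part is making the $O(1)$ and $O(h)$ constants uniform in $\theta\in(0,1)$, since the one-sided expansions about $\alpha$ involve distances $\theta h$ and $(1-\theta)h$ that can degenerate. Because $\widehat\beta_H$ is the exact effective conductivity of the cut cell for piecewise-linear flux, the $\theta$-dependence should enter only through bounded factors. I take the lemma as given.

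With the three hypotheses in place, Theorem~\ref{thm:cancellation-second-order} applies directly. The regular nodes contribute $O(h^2)$ through the bound $\|L_h^{-1}\|_\infty\le C$. The interface pair contributes $O(h^2)$ through the decomposition \eqref{eq:central}, using $G_{iI}=O(h)$ and $G_{i,I+1}-G_{iI}=O(h^2)$ from Corollary~\ref{cor:Green-bounds}. Together these give $\|U-u_h\|_\infty\le Ch^2$.
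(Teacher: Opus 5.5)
Your proposal is correct and follows the paper's argument: the paper obtains the corollary immediately from Theorem~\ref{thm:cancellation-second-order}, with Lemma~\ref{lem:harmonic-cancellation} supplying Assumption~\ref{ass:interface-cancellation}. Your extra checks are sound and make explicit what the paper leaves implicit. The bound $\min\{\beta^-,\beta^+\}\le\widehat\beta_H\le\max\{\beta^-,\beta^+\}$ gives Green's-function constants uniform in $h$ and $\theta$, and you are right that $\tau_i=O(h^2)$ at ordinary nodes tacitly needs more than the lemma's $C^3$ regularity near the interface. The $\theta$-uniformity you flag in the lemma is in fact routine, since its Taylor remainders are $O(h^3)$ independently of $\theta$ and $h_\ell/h=\theta+1/2\in(1/2,3/2)$.
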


Lemma~\ref{lem:harmonic-cancellation} shows that the weighted harmonic
scheme has exactly the truncation-error structure required by the
preceding theorem.  The argument, however, suggests that the mechanism
is not specific to this particular choice of cut-face coefficient.
What is essential is the conservative coupling of the two
interface-adjacent equations together with the resulting combined
consistency condition.  This observation motivates the broader class
of interface schemes considered in the next section.

\section{A Class of Conservative Interface Schemes}
\label{sec:family}

We now formulate the broader class suggested by the preceding
analysis.  The essential structural requirement is that the two
equations adjacent to the interface share a single numerical flux on
the cut interval.  When the two equations are added, this common flux
cancels exactly.  When this conservation property is combined with \(O(1)\) local
consistency at the two interface-adjacent nodes and first-order
consistency of the combined equation, the interface defects have the
cancellation structure required by
Theorem~\ref{thm:cancellation-second-order}.

This observation leads naturally to a family of interface schemes.
Let
\[
x_I<\alpha<x_{I+1},
\qquad
\alpha=x_I+\theta h,
\qquad
0<\theta<1,
\]
and let
\[
[u]_\alpha=w,
\qquad
[\beta u_x]_\alpha=v.
\]
We consider a numerical flux on the cut interval of the form
\begin{equation}\label{eq:cut-flux}
F_{I+\frac12}^h
=
\frac{\gamma}{h}
\left(
U_{I+1}-U_I-c_w w
\right)
-c_v v,
\end{equation}
where
\[
\gamma>0,
\qquad
c_w,\;c_v\in\mathbb R
\]
may depend on
\[
\theta,\qquad \beta^-,\qquad \beta^+,
\]
but not on \(h\).

The two equations adjacent to the interface are written using this
same cut-face flux:
\[
\frac{1}{h}
\left(
F_{I-\frac12}^h-F_{I+\frac12}^h
\right)
=f(x_I),
\]
and
\[
\frac{1}{h}
\left(
F_{I+\frac12}^h-F_{I+\frac32}^h
\right)
=f(x_{I+1})
\]
with the ordinary fluxes
\[
F_{i+\frac12}^h
=
\beta_{i+\frac12}
\frac{U_{i+1}-U_i}{h}
\]
away from the interface.

Because exactly the same quantity \(F_{I+\frac12}^h\) occurs in the
two equations with opposite signs, the cut-face flux cancels
identically when the equations are added.  This conservation
property is independent of the particular choice of
\(\gamma,c_w,c_v\).

We next determine conditions on the parameters for which the cut-face
flux is consistent with the interface conditions.  For the exact
solution,
\[
u_{I+1}-u_I
=
w
+
h\left(
\theta u_x^-+(1-\theta)u_x^+
\right)
+O(h^2).
\]
Hence
\[
F_{I+\frac12}^h
=
\frac{\gamma}{h}(1-c_w)w
+
\gamma
\left(
\theta u_x^-+(1-\theta)u_x^+
\right)
-c_vv
+O(h).
\]
To avoid an \(O(h^{-1})\) contribution when \(w\ne0\), it is necessary
to choose
\[
c_w=1.
\]

Using
\[
\beta^+u_x^+-\beta^-u_x^-=v,
\]
we may write
\[
u_x^+
=
\frac{\beta^-}{\beta^+}u_x^-
+\frac{v}{\beta^+}.
\]
Therefore
\[
\theta u_x^-+(1-\theta)u_x^+
=
\left(
\theta+(1-\theta)\frac{\beta^-}{\beta^+}
\right)u_x^-
+
\frac{1-\theta}{\beta^+}v.
\]
Thus
\[
F_{I+\frac12}^h
=
\gamma
\left(
\theta+(1-\theta)\frac{\beta^-}{\beta^+}
\right)u_x^-
+
\left(
\frac{\gamma(1-\theta)}{\beta^+}-c_v
\right)v
+O(h).
\]

If the cut-face flux is required to approximate the left physical
flux \(\beta^-u_x^-\), consistency gives
\[
\gamma
\left(
\theta+(1-\theta)\frac{\beta^-}{\beta^+}
\right)
=
\beta^-.
\]
Consequently,
\[
\gamma
=
\frac{\beta^-\beta^+}
{\theta\beta^++(1-\theta)\beta^-}
=
\left(
\frac{\theta}{\beta^-}
+
\frac{1-\theta}{\beta^+}
\right)^{-1}.
\]
The coefficient of \(v\) must then vanish, giving
\[
c_v
=
\frac{\gamma(1-\theta)}{\beta^+}
=
\frac{(1-\theta)\beta^-}
{\theta\beta^++(1-\theta)\beta^-}.
\]

Thus, within this particular three-parameter representation, exact
first-order consistency determines the familiar weighted harmonic
coefficient and its associated jump correction.

The calculation nevertheless suggests a broader structural class.
The freedom does not arise by arbitrarily replacing the harmonic
coefficient while retaining the same correction terms.  Rather,
alternative interface discretizations may be admitted if their flux
representation and distribution of the prescribed jump data preserve
conservation and the required local and combined consistency
properties.

Motivated by this observation, we introduce the following class.

\begin{definition}[Admissible interface scheme]
\label{def:admissible}
A two-node interface discretization is called admissible if:
\begin{enumerate}
\item the two interface-adjacent equations share a single numerical
      cut-face flux, occurring with opposite signs;

\item the resulting discrete operator has positive face coefficients
      and the conservative flux form of Section~\ref{sec:Green};

\item at all ordinary nodes,
      \[
      \tau_i=O(h^2);
      \]

\item at the two interface-adjacent nodes,
      \[
      \tau_I=O(1),
      \qquad
      \tau_{I+1}=O(1);
      \]

\item the two interface defects satisfy the combined consistency
      condition
      \[
      \tau_I+\tau_{I+1}=O(h).
      \]
\end{enumerate}
\end{definition}

The first condition expresses conservation and is the structural
reason that the cut-face contribution disappears when the two
interface equations are added.  Conditions (3)--(5) give the required truncation-error structure,
while condition (2) ensures that the Green's-function estimates of
Section~\ref{sec:Green} apply.  Hence
Theorem~\ref{thm:cancellation-second-order} yields the following
immediate consequence.

\begin{corollary}
\label{cor:admissible-family}
Every admissible interface scheme in the sense of
Definition~\ref{def:admissible} satisfies
\[
\|U-u_h\|_\infty\le Ch^2.
\]
\end{corollary}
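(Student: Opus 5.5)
The corollary follows directly from Theorem~\ref{thm:cancellation-second-order}: I will check that each item of Definition~\ref{def:admissible} supplies one of that theorem's hypotheses, and then repeat its Green's-function argument so the constants are explicit.

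First, condition (2) states that the assembled operator is the conservative tridiagonal flux form of Section~\ref{sec:Green}, with every face coefficient positive. This includes the cut-face coefficient $\gamma$, once the jump corrections $c_w w$ and $c_v v$ are moved to the right-hand side. Therefore Theorem~\ref{thm:Ah-inverse}, Proposition~\ref{prop:Green-kernel} and Corollary~\ref{cor:Green-bounds} all apply to $L_h$. One point needs care: the constants in Corollary~\ref{cor:Green-bounds} depend on $\beta_{\min}$. I will take
\[
\beta_{\min}\ge\min\{\beta^-,\beta^+,\gamma\}>0,
\]
using that the face coefficients away from the interface are the physical values $\beta^\pm$ and that $\gamma$ is independent of $h$. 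This gives $h$-independent bounds
\[
\|L_h^{-1}\|_\infty\le C,\qquad 0<G_{ij}\le Ch,\qquad |G_{i,j+1}-G_{ij}|\le Ch^2 .
\]

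Second, conditions (3)--(5) are exactly the truncation-error hypotheses of Theorem~\ref{thm:cancellation-second-order}: $\tau_i=O(h^2)$ at ordinary nodes, together with Assumption~\ref{ass:interface-cancellation} at $I$ and $I+1$. To finish, I write
\[
e_i=-\sum_j G_{ij}\tau_j .
\]
The regular part is bounded by $\|L_h^{-1}\|_\infty\max_{j\ne I,I+1}|\tau_j|=O(h^2)$. The interface part is handled by the decomposition \eqref{eq:central}:
\[
G_{iI}(\tau_I+\tau_{I+1})=O(h)\cdot O(h),\qquad \bigl(G_{i,I+1}-G_{iI}\bigr)\tau_{I+1}=O(h^2)\cdot O(1),
\]
so both terms are $O(h^2)$. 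Taking the maximum over $i$ gives $\|U-u_h\|_\infty\le Ch^2$.

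The only step that is more than bookkeeping is confirming that all $O(\cdot)$ constants are uniform in $h$. For the Green's-kernel bounds this is handled by the lower bound on $\beta_{\min}$ above. For the truncation errors, I will read the $O(\cdot)$ statements in Definition~\ref{def:admissible} as holding with constants independent of $h$ and of the interface offset $\theta$. Condition (1) is not used directly in the estimate; its role is to explain why condition (5) is natural.
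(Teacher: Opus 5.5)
Your proposal is correct and follows the paper's route: conditions (2)--(5) of Definition~\ref{def:admissible} supply the hypotheses of Theorem~\ref{thm:cancellation-second-order}, and you then re-run its Green's-function decomposition \eqref{eq:central}. Your remark that $\beta_{\min}$ must be bounded below uniformly in $h$ makes explicit a point the paper leaves tacit; since $\theta$ generally varies with $h$, what is really needed is $\inf_\theta\gamma(\theta)>0$, which holds for the weighted harmonic choice because $\widehat\beta_H\ge\min\{\beta^-,\beta^+\}$.
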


\begin{proof}
This follows directly from
Theorem~\ref{thm:cancellation-second-order} and the defining
properties of an admissible scheme.
\end{proof}

The weighted harmonic construction considered in
Section~\ref{sec:interface-error} corresponds to
\[
\gamma
=
\widehat\beta_H
=
\left(
\frac{\theta}{\beta^-}
+
\frac{1-\theta}{\beta^+}
\right)^{-1},
\qquad
c_w=1,
\]
and
\[
c_v
=
\frac{(1-\theta)\beta^-}
{\theta\beta^++(1-\theta)\beta^-}.
\]
By Lemma~\ref{lem:harmonic-cancellation}, it satisfies the interface
cancellation conditions and is therefore an admissible member of the
class.

Thus the weighted harmonic discretization is one realization of the
broader conservative cancellation principle formalized in
Definition~\ref{def:admissible}.  Other interface discretizations are
covered by the same analysis whenever their cut-face flux and jump
corrections satisfy these structural conditions; the argument does
not depend on a particular closed formula for the effective interface
coefficient.

\section{The Cancellation Mechanism in Two Dimensions}
\label{sec:2d}

We next examine how the conservative cancellation mechanism
extends to Cartesian discretizations in two dimensions.  The purpose
of this section is not to develop a complete multidimensional
interface theory, but to identify the part of the one-dimensional
argument that survives directly for a flat interface parallel to a
coordinate direction.

Consider
\[
-\nabla\cdot(\beta\nabla u)=f
\qquad\text{in }\Omega\setminus\Gamma,
\]
where \(\Omega\subset\mathbb R^2\) is rectangular and the interface
\(\Gamma\) separates \(\Omega\) into two subdomains
\(\Omega^-\) and \(\Omega^+\).  Assume
\[
\beta=
\begin{cases}
\beta^-,&\Omega^-,\\
\beta^+,&\Omega^+,
\end{cases}
\qquad
\beta^\pm>0,
\]
and impose
\[
[u]_\Gamma=w,
\qquad
[\beta\partial_nu]_\Gamma=v.
\]

We use a Cartesian grid with mesh widths \(h_x\) and \(h_y\).  Away
from the interface, the standard conservative five-point operator is
used:
\[
(L_hU)_{ij}
=
-\frac{1}{h_x^2}
\left[
\beta_{i+\frac12,j}(U_{i+1,j}-U_{ij})
-
\beta_{i-\frac12,j}(U_{ij}-U_{i-1,j})
\right]
\]
\[
\hspace{20mm}
-\frac{1}{h_y^2}
\left[
\beta_{i,j+\frac12}(U_{i,j+1}-U_{ij})
-
\beta_{i,j-\frac12}(U_{ij}-U_{i,j-1})
\right].
\]

Suppose first that the interface is a vertical straight line
\[
\Gamma=\{x=\alpha\},
\]
with
\[
x_I<\alpha<x_{I+1}.
\]
Write
\[
\alpha=x_I+\theta h_x,
\qquad
0<\theta<1.
\]
Every horizontal grid line therefore encounters the same
one-dimensional cut geometry.

On each horizontal edge cut by the interface
\[
[(x_I,y_j),(x_{I+1},y_j)],
\]
we introduce the same type of conservative numerical flux used in
Section~\ref{sec:family}.  For the weighted harmonic member,
\[
\widehat\beta_{H,x}
=
\left(
\frac{\theta}{\beta^-}
+
\frac{1-\theta}{\beta^+}
\right)^{-1}.
\]
The jump data are incorporated through the corresponding correction
terms in the normal direction.  Since the interface is vertical, the
normal direction is the \(x\)-direction, whereas the ordinary
\(y\)-fluxes remain unchanged.

Thus, for every fixed \(j\), the pair of equations adjacent to the
interface contains one common cut-edge flux,
\[
F^h_{I+\frac12,j},
\]
with opposite signs.  Adding the two neighboring equations eliminates
this flux exactly.

This is the direct two-dimensional analogue of the conservative
cancellation identified in Sections~\ref{sec:interface-error}
and~\ref{sec:family}.

\begin{proposition}[Row-wise interface cancellation]
\label{prop:2d-cancellation}
Assume that the interface is the vertical line \(x=\alpha\), that
on every horizontal grid line the interface discretization is
obtained from an admissible one-dimensional conservative scheme of
Section~\ref{sec:family}, and that the one-sided derivatives required
in the local consistency estimates are uniformly bounded along the
interface.  Then, uniformly in \(j\),
\[
\tau_{I,j}=O(1),
\qquad
\tau_{I+1,j}=O(1),
\]
and
\[
\tau_{I,j}+\tau_{I+1,j}
=
O(h_x)+O(h_y^2).
\]
In particular, on a quasi-uniform Cartesian mesh,
\[
\tau_{I,j}+\tau_{I+1,j}=O(h).
\]
\end{proposition}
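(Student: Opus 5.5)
We split the two-dimensional truncation error at the interface-adjacent nodes into an \(x\)-part and a \(y\)-part, and treat each with tools already available. For fixed \(j\), write
\[
\tau_{I,j}=\tau^{x}_{I,j}+\tau^{y}_{I,j},
\qquad
\tau_{I+1,j}=\tau^{x}_{I+1,j}+\tau^{y}_{I+1,j}.
\]
Here \(\tau^x\) collects the horizontal flux differences together with the part of the right-hand side assigned to the normal direction, namely the jump corrections and the portion \(f^x:=-(\beta u_x)_x\) of the source. The remainder \(\tau^y\) collects the vertical second difference together with \(f^y:=-(\beta u_y)_y\), so that \(f=f^x+f^y\) pointwise off \(\Gamma\). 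We choose this splitting because the interface is vertical: the \(y\)-fluxes are ordinary and no interface correction enters them.

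\textbf{The \(y\)-part.} The nodes \((x_I,y_j)\) and \((x_{I+1},y_j)\) lie strictly inside \(\Omega^-\) and \(\Omega^+\), respectively. The vertical three-point stencil \(y_{j-1},y_j,y_{j+1}\) at \(x=x_I\) (or \(x_{I+1}\)) never crosses \(\Gamma\), and \(\beta\) is constant along it. A standard Taylor expansion in \(y\) therefore gives
\[
\tau^y_{I,j},\ \tau^y_{I+1,j}=O(h_y^2),
\]
with the constant controlled by the bounded one-sided derivatives \(\partial_y^4u^\pm\). These bounds are uniform in \(j\) by the hypothesis on the one-sided derivatives.

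\textbf{The \(x\)-part.} Along the fixed line \(y=y_j\), the function \(x\mapsto u(x,y_j)\) solves the one-dimensional interface problem
\[
-(\beta u_x)_x=f^x(\cdot,y_j),
\]
with jumps \([u]_\alpha=w(y_j)\) and \([\beta u_x]_\alpha=v(y_j)\). The horizontal part of the discrete equations at \(I\) and \(I+1\) is, by hypothesis, exactly an admissible one-dimensional scheme of Section~\ref{sec:family} applied to this problem. Definition~\ref{def:admissible} then gives
\[
\tau^x_{I,j}=O(1),\qquad \tau^x_{I+1,j}=O(1),\qquad \tau^x_{I,j}+\tau^x_{I+1,j}=O(h_x).
\]
For the weighted harmonic member, this is Lemma~\ref{lem:harmonic-cancellation} applied row by row. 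The constants depend only on the one-sided \(C^3\) norms in \(x\) of \(u^\pm(\cdot,y_j)\), which are bounded uniformly in \(j\) by assumption.

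\textbf{Combining.} Adding the two parts yields \(\tau_{I,j},\tau_{I+1,j}=O(1)\) and
\[
\tau_{I,j}+\tau_{I+1,j}=O(h_x)+O(h_y^2).
\]
On a quasi-uniform mesh we have \(h_x\sim h_y\sim h\), and this reduces to \(O(h)\).

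The main obstacle is the \(x\)-part. One must make sure that the one-dimensional admissibility conditions, which are stated abstractly for a fixed problem, really hold with constants uniform in \(j\). That is, the \(O(1)\) and \(O(h_x)\) bounds must depend only on derivative bounds and not on the row.

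A second point needs care: the source split \(f=f^x+f^y\) must match how the scheme assembles the right-hand side. The scheme uses the full \(f(x_i,y_j)\), so the splitting is purely an analytic device. The pieces recombine exactly because the discrete \(y\)-operator plus the discrete \(x\)-operator equals the full operator.

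To handle the uniformity, I would first redo the one-sided Taylor expansion of the Appendix with \(y\) as a parameter. The remainders are then bounded by
\[
\sup\,\lvert\partial_x^k u^\pm\rvert,\qquad k\le 3,
\]
over the strip \([x_{I-1},x_{I+2}]\times\Omega_y\), which is finite by hypothesis.
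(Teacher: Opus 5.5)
Your proposal is correct and follows the paper's proof. The steps are the same: the splitting $\tau_{i,j}=\tau^x_{i,j}+\tau^y_{i,j}$; row-by-row application of the one-dimensional admissibility (Lemma~\ref{lem:harmonic-cancellation} for the weighted harmonic member) to the $x$-part; a centered Taylor expansion giving $\tau^y_{I,j},\tau^y_{I+1,j}=O(h_y^2)$ because no vertical stencil crosses $\Gamma$; and then addition.

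Your explicit source split $f=f^x+f^y$ and your uniformity-in-$j$ bookkeeping make precise what the paper leaves implicit. One point both versions take for granted: if the interface equations carry the symmetric source scaling $h_\ell/h_x$, $h_r/h_x$ of Appendix~\ref{app:interface-cancellation}, the $y$-difference at those nodes must carry the same factor. Otherwise $\tau^y_{I,j}$ would contain the $O(1)$ term $(1-h_\ell/h_x)f^y$, and the two interface defects would no longer cancel.
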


\begin{proof}
Write the local truncation error as the sum of its contributions in
the two coordinate directions,
\[
\tau_{i,j}=\tau^x_{i,j}+\tau^y_{i,j}.
\]
For each fixed \(j\), the \(x\)-direction discretization at the two
nodes adjacent to the vertical interface is an admissible
one-dimensional conservative interface discretization with mesh size
\(h_x\).  Hence the one-dimensional cancellation property gives
\[
\tau^x_{I,j}=O(1),
\qquad
\tau^x_{I+1,j}=O(1),
\qquad
\tau^x_{I,j}+\tau^x_{I+1,j}=O(h_x).
\]

In the \(y\)-direction no grid line crosses the interface.  The
standard centered discretization is therefore applied entirely within
a single smooth subdomain, and the usual Taylor expansion gives
\[
\tau^y_{I,j}=O(h_y^2),
\qquad
\tau^y_{I+1,j}=O(h_y^2).
\]
Consequently,
\[
\begin{aligned}
\tau_{I,j}+\tau_{I+1,j}
&=
\bigl(\tau^x_{I,j}+\tau^x_{I+1,j}\bigr)
+
\bigl(\tau^y_{I,j}+\tau^y_{I+1,j}\bigr)\\
&=O(h_x)+O(h_y^2).
\end{aligned}
\]
The individual estimates follow in the same way.  If \(h_x\) and
\(h_y\) are comparable, the asserted \(O(h)\) estimate follows.
\end{proof}
The same argument applies, with \(x\) and \(y\) interchanged, to a
horizontal interface.

The proposition identifies the part of the one-dimensional mechanism
that carries over directly to two dimensions.  Along every grid line
crossing the interface, the two interface-adjacent equations share one
cut-edge flux with opposite signs, so that this internal flux cancels
exactly when the equations are combined.  The jump corrections enter
through the right-hand side, while the positive face coefficients
retain the usual diffusion-type sign structure of the discrete
operator.

What does not carry over directly is the Green's-function step.  In
one dimension, the explicit representation in
Section~\ref{sec:Green} yields
\[
G_{ij}=O(h),
\qquad
G_{i,j+1}-G_{ij}=O(h^2),
\]
which converts the paired interface defects into an \(O(h^2)\) global
contribution.  The two-dimensional discrete Green's kernel has a
different structure, and corresponding multidimensional estimates
would be required for a complete maximum-norm convergence analysis.
We do not pursue that analysis here.

For a curved interface, additional geometric issues arise.  The
intersection location varies from one grid line to another, the
interface normal need not coincide with a coordinate direction, and
the prescribed normal-flux jump must be distributed among Cartesian
face fluxes in a geometrically consistent manner.

The preceding analysis nevertheless suggests a natural design
principle: interface corrections should be introduced through shared
conservative face fluxes whenever possible, thereby preserving exact
cancellation of internal numerical fluxes when neighboring equations
are combined.  A rigorous convergence theory for general curved
interfaces would additionally require geometric consistency estimates
and appropriate multidimensional discrete Green's-function bounds.
These questions lie beyond the scope of the present work.

\section{Numerical illustration of the cancellation mechanism}
\label{sec:numerics}

We conclude with two numerical experiments illustrating the
cancellation mechanism developed in the preceding sections.  The
purpose of these experiments is not to provide an extensive numerical
comparison, but rather to display directly the distinction between
the large individual truncation errors near the interface and their
much smaller combined effect.

\subsection{One-dimensional example}

We first consider the interface problem on $(0,1)$ with
\[
\alpha=\frac13,
\qquad
\beta^- =1.5,
\qquad
\beta^+=3,
\]
and exact solution
\[
u(x)=
\begin{cases}
\sin(5x), & 0\le x<\alpha,\\[1mm]
\cos(3x)-\cos(3), & \alpha<x\le1.
\end{cases}
\]
Thus \(u(0)=u(1)=0\).
The source term and interface jumps are obtained from
the exact solution so that
\[
-(\beta u_x)_x=f,
\qquad
[u]_\alpha=w,
\qquad
[\beta u_x]_\alpha=v.
\]
A closely related test problem is used in \cite{pan2026harmonic}.  Here, however, our interest is specifically
in the behavior of the two interface truncation errors.

Let $x_I<\alpha<x_{I+1}$, and let $\tau_I$ and $\tau_{I+1}$ denote
the truncation errors in the symmetrically scaled interface equations
used in the analysis of Section~\ref{sec:interface-error}.  Table~\ref{tab:1d-cancellation} reports the two individual defects,
the normalized combined defect
\[
h^{-1}|\tau_I+\tau_{I+1}|,
\]
and the maximum-norm error.

\begin{table}[htbp]
\centering
\caption{One-dimensional interface truncation errors and global
maximum-norm error.}
\label{tab:1d-cancellation}
\begin{tabular}{c|cccc}
\hline
$N$
& $|\tau_I|$
& $|\tau_{I+1}|$
& $h^{-1}|\tau_I+\tau_{I+1}|$
& $\|U-u_h\|_\infty$
\\
\hline
32
& $9.5806$
& $9.2530$
& $10.482$
& $4.3434\times10^{-3}$
\\
64
& $0.7096$
& $0.7789$
& $4.437$
& $2.3005\times10^{-4}$
\\
128
& $9.4884$
& $9.4177$
& $9.051$
& $2.8438\times10^{-4}$
\\
256
& $0.6832$
& $0.7045$
& $5.444$
& $1.4146\times10^{-5}$
\\
512
& $9.4725$
& $9.4555$
& $8.691$
& $1.7971\times10^{-5}$
\\
1024
& $0.6800$
& $0.6856$
& $5.697$
& $8.8068\times10^{-7}$
\\
\hline
\end{tabular}
\end{table}

The individual interface defects clearly do not tend to zero; they
remain of order $O(1)$.  Their magnitudes alternate between two
families because the relative position of the fixed interface
$\alpha=1/3$ within the cut mesh interval changes under dyadic
refinement.  In contrast,
\[
h^{-1}|\tau_I+\tau_{I+1}|
\]
remains bounded.  Thus the computation directly exhibits the
cancellation relation
\[
\tau_I+\tau_{I+1}=O(h)
\]
used in the analysis.

The maximum-norm error displays the corresponding mesh-position
oscillation.  Comparing meshes for which the interface has the same
relative position gives
\[
4.3434\times10^{-3}
\longrightarrow
2.8438\times10^{-4}
\longrightarrow
1.7971\times10^{-5}
\]
for \(N=32,128,512\), and
\[
2.3005\times10^{-4}
\longrightarrow
1.4146\times10^{-5}
\longrightarrow
8.8068\times10^{-7}
\]
for \(N=64,256,1024\).
Since \(h\) is reduced by a factor of four along each subsequence,
these reductions are consistent with
\[
\|U-u_h\|_\infty=O(h^2).
\]
Thus the computation confirms the one-dimensional convergence result
and illustrates that the large individual local defects do not
determine the global convergence rate.

\subsection{Two-dimensional example}

We next consider the unit square
\[
\Omega=(0,1)^2
\]
with the vertical interface
\[
\Gamma=\{(x,y):x=\alpha\},
\qquad
\alpha=\frac13,
\]
and the same piecewise constant coefficient
\[
\beta^- =1.5,
\qquad
\beta^+=3.
\]
The exact solution is
\[
u(x,y)=
\begin{cases}
\sin(5x)\sin(\pi y), & x<\alpha,\\[1mm]
(\cos(3x)-\cos(3))\sin(\pi y), & x>\alpha.
\end{cases}
\]
Thus \(u=0\) on \(\partial\Omega\).  The source term and interface
jump data are determined from the exact solution.  This test is a
homogeneous-boundary modification of the two-dimensional example
considered in \cite{pan2026harmonic}.

The interface is parallel to a coordinate direction, as in
Section~\ref{sec:2d}. For each fixed horizontal grid line $y=y_j$,
let $\tau_{I,j}$ and $\tau_{I+1,j}$ denote the truncation errors at
the two nodes adjacent to the interface.  To examine the row-wise
cancellation uniformly in $y$, we report
\[
\max_j|\tau_{I,j}|,
\qquad
\max_j|\tau_{I+1,j}|,
\qquad
\frac{1}{h}
\max_j|\tau_{I,j}+\tau_{I+1,j}|.
\]

\begin{table}[htbp]
\centering
\caption{Two-dimensional row-wise interface cancellation and global
maximum-norm error.}
\label{tab:2d-cancellation}
\begin{tabular}{c|cccc}
\hline
$N$
& $\max_j|\tau_{I,j}|$
& $\max_j|\tau_{I+1,j}|$
& $h^{-1}\max_j|\tau_{I,j}+\tau_{I+1,j}|$
& $\|U-u_h\|_\infty$
\\
\hline
16  & $1.0727$ & $0.9067$ & $2.6554$ & $4.5895\times10^{-3}$\\
32  & $9.5944$ & $9.2232$ & $11.880$ & $4.4728\times10^{-3}$\\
64  & $0.7120$ & $0.7685$ & $3.6115$ & $2.7165\times10^{-4}$\\
128 & $9.4893$ & $9.4158$ & $9.4035$ & $2.9456\times10^{-4}$\\
256 & $0.6834$ & $0.7038$ & $5.2354$ & $1.6779\times10^{-5}$\\
512 & $9.4725$ & $9.4554$ & $8.7788$ & $1.8640\times10^{-5}$\\
\hline
\end{tabular}
\end{table}

The individual interface defects remain bounded but exhibit a marked
alternation with the relative position of the interface in the mesh.
In contrast,
\[
h^{-1}\max_j|\tau_{I,j}+\tau_{I+1,j}|
\]
remains bounded over the refinements shown, in agreement with the
row-wise cancellation estimate of
Proposition~\ref{prop:2d-cancellation}.

The maximum-norm errors exhibit the same alternating pattern.
Comparing meshes for which the interface has the same relative
position gives
\[
4.5895\times10^{-3}
\longrightarrow
2.7165\times10^{-4}
\longrightarrow
1.6779\times10^{-5}
\]
for \(N=16,64,256\), and
\[
4.4728\times10^{-3}
\longrightarrow
2.9456\times10^{-4}
\longrightarrow
1.8640\times10^{-5}
\]
for \(N=32,128,512\).
Since \(h\) is reduced by a factor of four along each subsequence,
these reductions are consistent with second-order convergence.

Thus the computation directly illustrates the row-wise cancellation
proved in Section~\ref{sec:2d}.  It also provides numerical evidence
that, in this flat-interface setting, the resulting global
maximum-norm error is second order.  A complete two-dimensional
Green's-function proof of the latter observation is beyond the scope
of the present analysis.

\appendix
\section{Interface Truncation-Error Cancellation}
\label{app:interface-cancellation}

In this appendix we prove
Lemma~\ref{lem:harmonic-cancellation}.  We use throughout the sign
convention of the present paper,
\[
-(\beta u_x)_x=f.
\]

Let
\[
x_I<\alpha<x_{I+1},
\qquad
\delta_\ell:=\alpha-x_I,
\qquad
\delta_r:=x_{I+1}-\alpha,
\]
so that
\[
\delta_\ell+\delta_r=h.
\]
For the symmetrically scaled interface equations, set
\[
h_\ell
:=
\alpha-x_{I-\frac12}
=
\delta_\ell+\frac h2,
\qquad
h_r
:=
x_{I+\frac32}-\alpha
=
\delta_r+\frac h2.
\]
Thus
\[
h_\ell+h_r=2h.
\]

Write
\[
w=[u]_\alpha,
\qquad
v=[\beta u_x]_\alpha,
\]
and define the weighted harmonic coefficient
\[
\widehat\beta_H
=
\left(
\frac{\delta_\ell}{\beta^-h}
+
\frac{\delta_r}{\beta^+h}
\right)^{-1}.
\]

Since \(\delta_\ell=\theta h\) and
\(\delta_r=(1-\theta)h\), this agrees with the
coefficient \(\widehat\beta_H\) defined in
Lemma~\ref{lem:harmonic-cancellation}.

The correction terms corresponding to the sign convention
\(-(\beta u_x)_x=f\) are
\[
C_I
=
-\frac{\widehat\beta_H}{h_\ell h}
\left(
w+\frac{v}{\beta^+}\delta_r
\right),
\]
and
\[
C_{I+1}
=
\frac{\widehat\beta_H}{h_r h}
\left(
w-\frac{v}{\beta^-}\delta_\ell
\right).
\]

These correction terms are the contributions transferred to the
right-hand side when the corrected cut-interval flux is written in
the symmetric matrix form.

Thus the two assembled right-hand-side entries in the symmetric
formulation are
\[
r_I^h
=
\frac{h_\ell}{h}(f_I+C_I),
\qquad
r_{I+1}^h
=
\frac{h_r}{h}(f_{I+1}+C_{I+1}),
\]
where
\[
f_I=f(x_I),
\qquad
f_{I+1}=f(x_{I+1}).
\]

Let
\[
F_{i+\frac12}
=
\beta_{i+\frac12}
\frac{u_{i+1}-u_i}{h}
\]
denote the ordinary numerical flux, and let
\[
\widehat F_{I+\frac12}
=
\widehat\beta_H
\frac{u_{I+1}-u_I}{h}
\]
denote the numerical flux on the cut interval.  The two interface
truncation errors are therefore
\[
\tau_I
=
\frac1h
\left(
F_{I-\frac12}-\widehat F_{I+\frac12}
\right)
-
\frac{h_\ell}{h}(f_I+C_I),
\]
and
\[
\tau_{I+1}
=
\frac1h
\left(
\widehat F_{I+\frac12}-F_{I+\frac32}
\right)
-
\frac{h_r}{h}(f_{I+1}+C_{I+1}).
\]

We first establish the individual estimates.

For the left interface node, Taylor expansion about
\(x=\alpha\) gives
\[
u_I
=
u^-(\alpha)
-\delta_\ell u_x^-(\alpha)
+\frac{\delta_\ell^2}{2}u_{xx}^-(\alpha)
+O(h^3),
\]
\[
u_{I-1}
=
u^-(\alpha)
-(\delta_\ell+h)u_x^-(\alpha)
+\frac{(\delta_\ell+h)^2}{2}u_{xx}^-(\alpha)
+O(h^3),
\]
and
\[
u_{I+1}
=
u^+(\alpha)
+\delta_r u_x^+(\alpha)
+\frac{\delta_r^2}{2}u_{xx}^+(\alpha)
+O(h^3).
\]
Using
\[
u^+(\alpha)=u^-(\alpha)+w,
\qquad
\beta^+u_x^+(\alpha)
=
\beta^-u_x^-(\alpha)+v,
\]
we obtain
\[
u_{I+1}-u_I
=
w
+
\left(
\delta_\ell+
\frac{\beta^-}{\beta^+}\delta_r
\right)u_x^-(\alpha)
+
\frac{\delta_r}{\beta^+}v
+
O(h^2).
\]
Hence
\[
\widehat F_{I+\frac12}
=
\frac{\widehat\beta_H}{h}
\left(
w+\frac{\delta_r}{\beta^+}v
\right)
+
\beta^-u_x^-(\alpha)
+
O(h),
\]
because
\[
\frac{\widehat\beta_H}{h}
\left(
\delta_\ell+
\frac{\beta^-}{\beta^+}\delta_r
\right)
=
\beta^-.
\]
Since \(x_{I-\frac12}\) lies \(O(h)\) from the interface,
\[
F_{I-\frac12}
=
\beta^-u_x^-(\alpha)+O(h).
\]
Therefore
\[
\frac1h
\left(
F_{I-\frac12}-\widehat F_{I+\frac12}
\right)
=
-\frac{\widehat\beta_H}{h^2}
\left(
w+\frac{\delta_r}{\beta^+}v
\right)
+
O(1).
\]
On the other hand,
\[
\frac{h_\ell}{h}C_I
=
-\frac{\widehat\beta_H}{h^2}
\left(
w+\frac{\delta_r}{\beta^+}v
\right).
\]
The potentially singular terms therefore cancel exactly, and since
\[
\frac{h_\ell}{h}f_I=O(1),
\]
we conclude that
\[
\tau_I=O(1).
\]

For the right interface node, the same argument is carried out from
the \(+\) side.  Using
\[
u^-(\alpha)=u^+(\alpha)-w,
\qquad
\beta^-u_x^-(\alpha)
=
\beta^+u_x^+(\alpha)-v,
\]
we obtain
\[
u_{I+1}-u_I
=
w
+
\left(
\delta_r+
\frac{\beta^+}{\beta^-}\delta_\ell
\right)u_x^+(\alpha)
-
\frac{\delta_\ell}{\beta^-}v
+
O(h^2).
\]
Hence
\[
\widehat F_{I+\frac12}
=
\frac{\widehat\beta_H}{h}
\left(
w-\frac{\delta_\ell}{\beta^-}v
\right)
+
\beta^+u_x^+(\alpha)
+
O(h),
\]
where
\[
\frac{\widehat\beta_H}{h}
\left(
\delta_r+
\frac{\beta^+}{\beta^-}\delta_\ell
\right)
=
\beta^+.
\]
Since \(x_{I+\frac32}\) lies \(O(h)\) from the interface,
\[
F_{I+\frac32}
=
\beta^+u_x^+(\alpha)+O(h).
\]
We have
\[
\frac1h
\left(
\widehat F_{I+\frac12}-F_{I+\frac32}
\right)
=
\frac{\widehat\beta_H}{h^2}
\left(
w-\frac{\delta_\ell}{\beta^-}v
\right)
+
O(1).
\]
Also,
\[
\frac{h_r}{h}C_{I+1}
=
\frac{\widehat\beta_H}{h^2}
\left(
w-\frac{\delta_\ell}{\beta^-}v
\right).
\]
Again the potentially singular terms cancel exactly.  Therefore
\[
\tau_{I+1}=O(1).
\]

Adding the two truncation errors eliminates the common cut-interval
flux exactly:
\begin{equation}
\label{eq:app-sum}
\tau_I+\tau_{I+1}
=
\frac1h
\left(
F_{I-\frac12}-F_{I+\frac32}
\right)
-
\frac1h
\left[
h_\ell(f_I+C_I)
+
h_r(f_{I+1}+C_{I+1})
\right].
\end{equation}

Let
\[
q(x):=\beta(x)u_x(x)
\]
denote the physical flux on each side of the interface.  On each
smooth interval, the centered numerical flux satisfies
\[
F_{i+\frac12}
=
q(x_{i+\frac12})+O(h^2).
\]
Consequently,
\[
\frac1h
\left(
F_{I-\frac12}-F_{I+\frac32}
\right)
=
\frac1h
\left(
q(x_{I-\frac12})-q(x_{I+\frac32})
\right)
+O(h).
\]

Since
\[
-q'(x)=f(x),
\]
we have
\[
q(\alpha^-)-q(x_{I-\frac12})
=
-\int_{x_{I-\frac12}}^\alpha f(x)\,dx,
\]
and
\[
q(x_{I+\frac32})-q(\alpha^+)
=
-\int_\alpha^{x_{I+\frac32}} f(x)\,dx.
\]
Using
\[
q(\alpha^+)=q(\alpha^-)+[\beta u_x]_\alpha,
\]
we obtain
\[
q(x_{I-\frac12})-q(x_{I+\frac32})
=
\int_{x_{I-\frac12}}^{x_{I+\frac32}}
f(x)\,dx
-
[\beta u_x]_\alpha.
\]
Hence
\begin{equation}
\label{eq:app-flux-difference}
\frac1h
\left(
F_{I-\frac12}-F_{I+\frac32}
\right)
=
\frac1h
\left(
\int_{x_{I-\frac12}}^{x_{I+\frac32}}
f(x)\,dx
-
[\beta u_x]_\alpha
\right)
+O(h).
\end{equation}

Since \(u^\pm\in C^3\) and \(\beta^\pm\) are constant,
\(f^\pm=-\beta^\pm u_{xx}^\pm\) are \(C^1\) up to the interface.
Hence
\[
\int_{x_{I-\frac12}}^\alpha f(x)\,dx
=
h_\ell f_I+O(h^2),
\]
and
\[
\int_\alpha^{x_{I+\frac32}} f(x)\,dx
=
h_r f_{I+1}+O(h^2).
\]
Therefore
\begin{equation}
\label{eq:app-source}
\int_{x_{I-\frac12}}^{x_{I+\frac32}}f(x)\,dx
=
h_\ell f_I+h_r f_{I+1}+O(h^2).
\end{equation}

Using the definitions of \(C_I\) and \(C_{I+1}\) above,
\[
h_\ell C_I
=
-\frac{\widehat \beta_H}{h}
\left(
[u]_\alpha
+
\frac{[\beta u_x]_\alpha}{\beta^+}
(x_{I+1}-\alpha)
\right),
\]
and
\[
h_r C_{I+1}
=
\frac{\widehat \beta_H}{h}
\left(
[u]_\alpha
+
\frac{[\beta u_x]_\alpha}{\beta^-}
(x_I-\alpha)
\right).
\]
Adding these identities, the \([u]_\alpha\)-terms cancel exactly.
Using
\[
\widehat \beta_H
=
\left(
\frac{x_{I+1}-\alpha}{\beta^+h}
+
\frac{\alpha-x_I}{\beta^-h}
\right)^{-1},
\]
we obtain
\begin{equation}
\label{eq:app-correction}
h_\ell C_I+h_rC_{I+1}
=
-[\beta u_x]_\alpha.
\end{equation}

Substituting
\eqref{eq:app-flux-difference},
\eqref{eq:app-source}, and
\eqref{eq:app-correction}
into \eqref{eq:app-sum}, we find
\[
\begin{aligned}
\tau_I+\tau_{I+1}
&=
\frac1h
\left(
\int_{x_{I-\frac12}}^{x_{I+\frac32}}f(x)\,dx
-
[\beta u_x]_\alpha
\right)
\\
&\quad
-
\frac1h
\left(
h_\ell f_I+h_rf_{I+1}
-
[\beta u_x]_\alpha
\right)
+O(h)
\\
&=
\frac1h\,O(h^2)+O(h)
=
O(h).
\end{aligned}
\]
Therefore
\[
\tau_I=O(1),
\qquad
\tau_{I+1}=O(1),
\qquad
\tau_I+\tau_{I+1}=O(h).
\]
This proves Lemma~\ref{lem:harmonic-cancellation}.
\section*{Declaration of Interest}

The authors declare that they have no known competing financial interests
or personal relationships that could have appeared to influence the work
reported in this paper.
\bibliographystyle{siam}
\bibliography{references}
\end{document}